\documentclass{article}
\usepackage{amsmath}
\usepackage{amssymb}
\usepackage{amsthm}
\usepackage[latin1]{inputenc}
\usepackage[T1]{fontenc}
\usepackage{lmodern}
\usepackage{fullpage}
\usepackage[english]{babel}
\usepackage{amsfonts}
\usepackage{graphicx}
\usepackage{verbatim}
\usepackage{color}
\usepackage{stmaryrd}

\definecolor{mybrown}{RGB}{148,0,211}

\definecolor{liens}{rgb}{0.,0.,0.8}
\usepackage[colorlinks,
            linkcolor=liens,
            anchorcolor=liens,
            citecolor=liens,
            filecolor=liens,
            menucolor=liens,
            urlcolor=liens,
            bookmarks,
            bookmarksnumbered,
            pdfstartview=FitH]{hyperref}
\usepackage{multirow}
\usepackage[symbol]{footmisc}

\newcommand{\RR}{\mathbb R}
\newcommand{\NN}{\mathbb N}
\newcommand{\DD}{\mathbb D}
\newcommand{\CC}{\mathbb C}
\newcommand{\TT}{\mathbb T}

\newcommand{\ZZ}{\mathbb Z}

\newcommand{\cP}{{\cal P}}
\newcommand{\cR}{{\cal R}}
\newcommand{\ee}{e}
\newcommand{\abs}[1]{\ensuremath{|#1|}}

\newcommand{\Abs}[1]{\ensuremath{\left|#1\right|}}

\newtheorem{theorem}{Theorem}

\newtheorem{prop}{Proposition}
\newtheorem{lem}{Lemma}
\newtheorem{remark}{Remark}

\begin{document}

\title{Lower bounds in $H^2$-rational approximation to Blaschke products}
\author{Laurent Baratchart\footnotemark[1] \and
  Alexander Borichev\footnotemark[2] \and
  Sylvain Chevillard\footnotemark[1] \and
  Claire Coiffard\footnotemark[3] \and
  Rachid Zarouf\footnotemark[3]${}^{~,}$\footnotemark[4]\\[0.3cm]
\url{laurent.baratchart@inria.fr}\\
\url{alexander.borichev@math.cnrs.fr}\\
\url{sylvain.chevillard@inria.fr}\\
\url{claire.coiffard-marre@univ-amu.fr}\\
\url{rachid.zarouf@univ-amu.fr}
}
\date{February 2026}

\maketitle

   \footnotetext[1]{Universit\'e C\^ote d'Azur, Inria, 2004 route des Lucioles, BP 93, 06\,902 Sophia Antipolis Cedex, France.}
   \footnotetext[2]{Aix-Marseille Universit\'e, CNRS, I2M, Marseille, France.}
   \footnotetext[3]{Aix-Marseille Universit\'e, Laboratoire ADEF, Campus Universitaire de Saint-J\'er\^ome, 52 avenue Escadrille Normandie Ni\'emen, 13\,013 Marseille, France.}
   \footnotetext[4]{CPT, Aix-Marseille Universit\'e, Universit\'e de Toulon, 13\,288 Marseille, France.}

\paragraph{Abstract:}
We derive  lower bounds in best rational approximation of given degree 
to finite Blaschke products, in the Hardy space $H^2$ of the unit disk.
We first consider approximation to $z^N$, and then move on to more
general Blaschke products  whose zeros are bounded away from the circle.
The latter case depends on Fourier coefficients estimates for Blaschke products
which are of independent interest.
\paragraph{Keywords:} Complex rational approximation,  lower bounds,
Hardy space $H^2$, Blaschke products.\\
{\it Classification numbers (AMS):}  31B05, 35J25, 42B35, 46E20, 47B35.
\paragraph{Aknowledgement:} the first and third authors thank Alexandre Fran\c{c}ois for bringing the question addressed in Theorem \ref{initp} to their attention.

\tableofcontents

\newpage
\section{Introduction}
Rational approximation to analytic functions of a single complex variable
is an old,  classical piece of mathematical analysis  with famous applications to
number theory and spectral theory; see for example \cite{Siegel,KR,Nikolskii, GoNoHe}
and their bibliography. It has become  an efficient numerical tool and a
cornerstone
of modeling and design in applied mathematics and engineering.
Let us mention  \cite{Gr65,GoTre,GuSem,TeToGa, OSM, Pozzi, AvFaRe,YuWang,Tj_PRA77,BMSW06,BLS2007} for a sample.
Special interest attaches to the case where one approximates
on a closed curve a function that
extends holomorphically on one side of that curve.
In connection with deconvolution, system identification and control,
such issues typically arise on the line or the circle where they make 
contact  with extremal problems in
Hardy spaces \cite{B_CMFT99,GL84,Peller,Nikolskii2,OSM,IIS,YuWang}. 
Our model curve in this paper is the unit circle, though things
translate easily to the line, and approximation will take place in the least squares sense.

From a theoretical point of view, a lot of work has been devoted to
error rates and how they reflect  smoothness properties of the approximated function.
We mention converse theorems by Peller
on the speed of rational approximation
\cite{Peller}, the construction by Glover of near-best uniform rational
approximants \cite{GL84}, Parfenov's solution of 
the Gonchar conjecture  on the degree of rational approximation 
on compact sets of the domain of analyticity
\cite{Par86}, the Gonchar-Rakhmanov $n$-th root estimates in
uniform rational approximation {\it via} interpolation to 
sectionally holomorphic functions about an $S$-contour  and their
generalizations  to best $L^2$ and $L^p$ approximants  in
\cite{BaYaSta,Totik,BaStaYat}.

From a constructive point of view, finding demonstrably
optimal or near-optimal
approximants is extremely difficult except in special cases, and usually
requires more knowledge on the function than may possibly be obtained.
As a consequence, attention has focused mainly on computationally attractive
schemes dwelling on interpolation, like Pad\'e interpolants and
variants thereof \cite{BakerGravesMorris,St89,Nu2,Lub,Lubinsky1,AAA,SilYuTreBe}.

One fairly open issue, though, is the derivation of lower bounds on the
achievable approximation error in given degree. Such bounds are key to assess the quality of an approximant and play an important role in questions of
uniqueness,
see \cite{BSW96,BStW01b,BStW99,BS02,BaYaunic}. Most results in this direction
that we are aware of can be found in \cite{BCQ}, of which
the present paper is, in a sense, a sequel.
In the last reference, lower bounds in $L^2$ rational approximation  of given degree to Hardy functions in $H^2$ are obtained {\it via} a comparison
with AAK (meromorphic) approximants, using   topological tools.

However, \cite{BCQ} does not offer lower bounds depending explicitly
on the degrees of the approximant and of the approximated function, assuming that the latter is rational. Below, we derive such bounds in the special case where one approximates
a unimodular rational function; namely, a Blaschke product.
This case is a hard one in that uniform approximation is not possible (the best approximant is zero or equal to the approximated function), so one can surmise that $L^2$ approximation should be slow. In the simplest case
where the approximated function is a power of the complex variable, we obtain
a very explicit lower bound which is not too far off, judging from our numerical experiments using the RARL2 software. It accounts for the fact that, in the language of system theory, a pure delay system hardly lends itself to model reduction; or, in more trendy terminology, a linear neural net cannot well reproduce
delays.
The situation with general Blaschke products is more complex and the bound
less convincing, but still our result
seems to be the first of this kind and the proof leads us to derive
estimates for Fourier coefficients of Blaschke products which are of indepensent interest. 

We emphasize again
that the $L^2$ norm and
weighted variants thereof are of great importance in applications, due to 
their interpretation as a variance in a stochastic context. 
Moreover, $L^2$-best rational approximants in Hardy space have the 
remarkable property of being particular multipoint Pad\'e interpolants (the
interpolation points are of course unknown and depend on the function) \cite{Levin}. 

The paper is organized as follows. After setting up notation and recalling
some basic  degree theory and the Br\'ezis formula in Section \ref{prelim},
we discuss Hardy spaces in Section \ref{Hardysec} and
best rational (or meromorphic) approximation on the circle in
Section \ref{Rat}.  We present lower bounds in rational approximation to
$z^N$  and then to more general Blaschke products in Section \ref{Lbounds},
before concluding with a couple of numerical experiments using the RARL2 software in Section~\ref{sec:numericalresults}.
%approximation to $z^N$ (see figure \ref{firstBound} in 

\section{Notations and Preliminaries}
\label{prelim}
Let $\DD$ be the unit disk and $\TT$ the unit circle in the complex plane $\CC$. We put $z=x+iy$ to denote the real and imaginary parts of $z\in\CC$, which makes for the identification $\CC\sim \RR^2$.

We write $\cP_n[z]$ for the space of complex algebraic polynomials
of degree at most $n$ in the variable $z$, or simply $\cP_n$ if the
variable is understood. Below, we let
$\mathcal{Z}(q)$ indicate the set of zeros of a polynomial $q$. 
For $q_n\in\cP_n[z]$, we define its
{\emph{reciprocal polynomial}} to be
\begin{equation}
  \label{defrec}
  \widetilde{q}_n(z):=z^{n}\,\overline{q_n(1/\bar z)}.
  \end{equation}
One should be careful that this definition depends on $n$: if we consider
$q\in\cP_{n-1}$ as an element of $\cP_n$ with zero leading 
coefficient, the two definitions of $\widetilde{q}(z)$ in $\cP_{n-1}$ and
$\cP_{n}$ may be inconsistent. Therefore we always specify
which definition is used; {\it e.g.},
via a subscript ``$n$'' for $q_n$ like we did in \eqref{defrec}. Clearly the ``tilde''
operation is an involution of $\cP_n$ preserving the pointwise modulus on $\TT$.

We designate by $\cR_{m,n}=\cR_{m,n}(z)$ 
the set of complex rational functions of type $(m,n)$ in
$L^2$, namely those that can be written as $p_m/q_n$
where $p_m$ belongs to $\cP_m$ and $q_n\in \cP_n$
has no roots on ${\TT}$. 
When $r=p_m/q_n$  is in irreducible form,
the integer $\max\{m,n\}$ is the (exact) degree of~$r$.

We denote by $C(\TT)$ the space of continuous, 
complex-valued functions on $\TT$, and by $C^\infty(\TT)$ the subspace of smooth functions; that is, functions $f(e^{i\theta})$ that are locally $C^\infty$-smooth functions of $\theta\in\RR/(2\pi\ZZ)$. Hereafter, for a function on $\TT$, a superscript ``prime'' always
refers to the derivative with respect to $\theta$.
Note that $f^\prime$ is indeed
well-defined, because
$\frac{d}{d\theta}f(e^{i\theta})$ is independent of the continuous
branch $\theta$ of the argument  which is chosen modulo $2\pi$.
For $1\leq p\leq \infty$,
we let $L^p=L^p(\TT)$ indicate the familiar
Lebesgue space of (equivalence classes of a.e. coinciding) complex measurable functions on $\TT$ 
such that
\[\|f\|_{p}=\left(\frac{1}{2\pi}
\int_{0}^{2\pi}|f(e^{i\theta})|^p\,d\theta\right)^{1/p}<\infty
\quad \text{if }1\leq p<\infty, \qquad
\|f\|_\infty={\rm ess.}\sup_{\theta \in 
  [0,2\pi]}|f(e^{i\theta})|<\infty.\]
The definition extends to $\CC^n$-valued functions, replacing moduli by
Euclidean norms.
In a similar vein, we put $C(\DD)$ and $C^\infty(\DD)$ for  continuous and smooth functions on $\DD$. We denote by $C(\overline{\DD})$ (resp. $C^\infty(\overline{\DD})$) those extending contiuously to $\overline{\DD}$ (resp. smoothly
to a neighborhood of $\overline{\DD}$).
The Lebesgue spaces $L^p(\DD)$ are defined analogously, namely:
\[\|f\|_{L^p(\DD)}=\left(\frac{1}{\pi}
\int_{\DD}|f(z)|^p\,dm(z)\right)^{1/p}<\infty
\quad \text{if }1\leq p<\infty, \qquad
\|f\|_{L^\infty(\DD)}={\rm ess.}\sup_{z \in 
  \DD}|f(z)|<\infty,\]
where $m$ indicates Lebesgue measure.

The Sobolev space $W^{1,2}$ consists of complex functions on $\TT$,
absolutely continuous with respect to arclength, whose derivative 
$f^\prime (e^{i\theta}):=\frac{d}{d\theta}f(e^{i\theta})$ again lies in $L^2$.
It is a Banach space with  norm 
\begin{align}
\label{normST}
 \|f\|^2_{W^{1,2}}= 
\|f\|^2_{L^2}+\|f^\prime\|^2_{L^2}.
\nonumber
\end{align}
If one associates with $f:\TT\to\CC$ its Fourier coefficients $(a_k)_{k\in\ZZ}$, so that
$f(e^{i\theta})=\Sigma_{k\in\ZZ}^\infty a_ke^{ik\theta}$ is the Fourier expansion of $f$, one may identify $L^2$ with $l^2(\ZZ)$ and $W^{1,2}$ with the weighted
space $l^2(\ZZ,w)$, where the weight is $w(k)=1+|k|^2$; moreover this identification is isometric, because by Parseval's relation one has
\[\|f\|_2=\left(\sum_{k\in\ZZ}^\infty |a_k|^2\right)^{1/2}\quad \mathrm{ and}\quad
  \|f^\prime\|_2=\left(\sum_{k\in\ZZ}^\infty |k|^2|a_k|^2\right)^{1/2}.\]
The fractional Sobolev space $W^{1/2,2}$ is a real interpolation space of exponent 
$1/2$ between $L^2$ and $W^{1,2}$,
a norm on which is given in terms of the Fourier coefficients (see \cite[thm 7.55]{Adams}) by
\begin{equation}
  \label{norm123}
  \|f\|_{1/2}^2=|a_0|^2+\sum_{k=-\infty}^\infty |k||a_k|^2, \qquad
  f(e^{i\theta})=\sum_{k=-\infty}^\infty a_ke^{ik\theta}.
  \end{equation}
Though the norm in \eqref{norm123} is the one we shall use, 
let us mention that an equivalent  norm on $W^{1/2,2}$ is given in terms of the values of $f$ by
\begin{equation}
\label{defWt}
\|f\|_{W^{1/2,2}}:=\|f\|_{L^2}+
\left(\int_{\TT}\int_{\TT}
\frac{|f(e^{i\theta})-f(e^{i\theta_1})|^2}{\left|e^{i\theta}-e^{i\theta_1}\right|^2}\,d\theta d\theta_1
\right)^{1/2},
\end{equation}
%where $\Lambda(e^{i\theta},e^{i\theta'})$ denotes  the length of the smallest
%arc between $e^{i\theta}$ and $e^{i\theta'}$;
see \cite[{ thm} 7.47 \& { rem. 7.45} ]{Adams}.
In fact, the norm in \eqref{defWt} characterizes $W^{1/2,2}$ as the space of traces on $\TT$ of Sobolev functions in $W^{1,2}(\DD)$; the latter space
consists of complex functions
in $L^2(\DD)$ whose distributional derivatives of the first order again lie in
$L^2(\DD)$. We refer the reader to \cite{Adams} for more on Sobolev spaces.

%Interpolating in the Fourier domain rather than in function spaces on $\TT$,
%it is a well-known consequence of a theorem by Caldern
%see for example \cite[section 7.59]{Adams} for an exposition dealing with continuous Fourier transforms that carries over \emph{mutatis mutandis} to the periodic case.
% The Sobolev space $W^{1,2}(\DD)$ consists of complex functions
% in $L^2(\DD)$ whose distributional derivatives of the first order again lie in
% $L^2(\DD)$. It is a Banach space with  norm 
% \begin{align}
% \label{normS}
%  \|f\|^2_{W^{1,2}(\DD)}= 
% \|f\|^2_{L^2(\DD)}+\|\nabla f\|^2_{L^2(\DD)}
% \end{align}
% where $\nabla f$ is the gradient of $f$, defined as
% $\nabla f=(\partial_{x}f,\partial_{y} f)^t$  
% with $\partial_{x}$ (resp. $\partial_{y}$) to indicate the derivative with respect 
% to $x$ (resp. $y$) and a superscript ``$t$'' to mean ``transpose''.
% As is well-known, $W^{1/2,2}$ is exactly the space of traces of $W^{1,2}(\DD)$-functions on $\TT$ \cite[{ thms} 7.39 \& 7.45 \&{ rem. 7.45} ]{Adams};
% here, the trace operator is the unique extension to $W^{1/2,2}$ of the
% ordinary trace on $\TT$ for functions in $C^\infty(\overline{\DD})$.
% Alternatively, its effect on $f\in W^{1,2}(\DD)$ can be described  as evaluation 
% at Lebesgue points on $\TT$ of some (in fact any) extension of
% $f$  to a Sobolev function on $\RR^N$; see \cite[\rm{rem.} 4.4.5]{ziemer}. 

The winding number $W(f)$ of a nowhere vanishing $f\in C(\TT)$ is the number of times the tip of the vector $f(e^{i\theta})$ winds around the origin as $\theta$ traverses $[0,2\pi)$. When $f\in C^\infty(\TT)$, so that $f=\rho e^{i\alpha}$ for some smooth functions $\rho:\TT\to\RR^+$ and $\alpha:\TT\to\RR/(2\pi\ZZ)$
with  $\rho>0$, the winding number  may  formally be defined as
\begin{equation}
  \label{defind}
  W(f):=\frac{1}{2\pi}\int_0^{2\pi}\left(\alpha(e^{i\theta})\right)^\prime d\theta =
  \frac{1}{2i\pi}\int_0^{2\pi}\frac{(e^{i\alpha})^\prime }{e^{i\alpha}}(e^{i\theta})d\theta=\frac{1}{2i\pi}
  \int_0^{2\pi}\frac{f^\prime}{f}(e^{i\theta})d\theta,
\end{equation}
where we used that
\[\int_0^{2\pi}\frac{(\rho(e^{i\theta}))^\prime }{\rho(e^{i\theta})}d\theta=\left[ \log\rho(e^{i\theta}) \right]_{\theta=0}^{\theta=2\pi}=0.
  \]
  When $f$ is merely continuous and never vanishing, for each $\varepsilon>0$ one can pick a smooth
  function $g\in C^\infty(\TT)$ such that $|f-g|<\varepsilon |f|$ on $\TT$, and
  for $\varepsilon$ small enough any two such $g$ are smoothly homotopic in a never vanishing manner, hence
  they have the same winding number because the latter is integer-valued and varies continuously along the homotopy. This common winding number is, by definition, the one of $f$. In particular, if $f\in C(\overline{\DD})$ is never zero
  on $\TT$ and meromorphic in $\DD$, then
  \begin{equation}
    \label{indmero}
    W(f_{|\TT})=\sharp Z(f)-\sharp P (f),
  \end{equation}
  where $\sharp Z(f)$ is the number of zeros and $\sharp P(f)$ the number of poles of $f$ in $\DD$, counting multiplicities. Indeed, $f$ can be uniformly approximated by $f_r(e^{i\theta}):=f(re^{i\theta})$ for $r$ small enough, and
  then \eqref{indmero} follows from \eqref{defind} and the residue formula.
  
  Let us stress that the winding number is a special case of the more general
  notion of topological
  degree, that applies in higher dimension as well. More precisely, for $F:\mathbb{S}^n\to\mathbb{S}^n$ a $C^\infty$-smooth map from the unit sphere of dimension $n$ into itself (viewed here as a smooth oriented manifold), the topological degree of $F$ is
  \begin{equation}
    \label{topd}
    \mathrm{deg}\,F:=\sum_{\xi\in F^{-1}(c)}\mathrm{{\bf orient}}\left(DF(\xi)\right),
    \end{equation}
  where $c$ is any regular value of $F$ (a value is regular if it is
  not attained at a point $\xi$ where the rank of the derivative $DF(\xi)$ drops) and $\mathrm{{\bf orient}}(DF(\xi))$  is equal to +1 if $DF(\xi)$ preserves orientation and to -1 if it
  reverses orientation. Note that the right hand side of \eqref{topd} is a well-defined finite sum because $c$ is a regular value. The topological degree of a continuous map $F$ is then defined as the one of a smooth map arbitrary close to $F$.
  When $n=1$ and $\mathbb{S}^1\sim \TT$ is oriented counterclockwise while $F$ is smooth so that $F=e^{i\alpha}$ for some smooth $\alpha:\TT\to \RR/(2\pi\ZZ)$, a regular value is a direction $e^{it_0}$
  which is attained under $F$ only at points $e^{i\theta}$ where
  $(\alpha(e^{i\theta}))^\prime\neq0$, and $\mathrm{{\bf orient}} (DF(e^{i\theta}))$ is $+1$ if  $F(e^{i\theta})$  traverses $e^{i t_0}$ counterclockwise ($(\alpha(e^{i\theta}))^\prime>0$)   and $-1$ if   it traverses clockwise ($(\alpha(e^{i\theta}))^\prime<0$). It is thus appearent that
  $\mathrm{deg}\, F$ algebraically counts  the number of times $F(e^{i\theta})$ winds around
  zero as  $\theta$ percurses $[0,2\pi)$.   Hence, the winding number of
  a continuous never-vanishing $f:\TT\to\CC$ is just the  topological degree
  of $\frac{f}{|f|}:\TT\to\TT$.  
  We refer the reader to \cite[Chapter 3]{GP} for basic facts on the topological degree (see in particular the second corollary on p. 115 of that reference), and  its computation as a sum of
  local indices at the singular points of any  nondegenerate extension
  of $F$ inside the ball that generalizes  \eqref{indmero}
  (see proposition on page 108 and the computation in the proof
  of proposition on p. 110 of \cite{GP}).

  % Now, the winding number of $f$ is just the  topological degree of $\frac{f}{|f|}:\TT\to\TT$,
  % and the fact that it is given by \eqref{indmero} when $f$ extends meromorphically into $\DD$ follows from the computation in the proof of the proposition on p. 110 of \cite{GP} (and the analogous computation in the case of poles which are zeros of $1/f$).

  When $f:\TT\to\TT$ is continuous and lies in $W^{1/2,2}$ (membership in $W^{1/2,2}$ alone does not imply continuity), a remarkable formula of Br\'ezis asserts that
  \begin{equation}
    \label{degreeW12}
W(f)=\sum_{k\in\ZZ} k |a_k|^2,\qquad  f(e^{i\theta})=\sum_{k=-\infty}^\infty a_ke^{ik\theta};
    \end{equation}  
    see \cite{BreNi} for details.
    \section{Hardy spaces of the disk}
    \label{Hardysec}
 We let
$H^2=H^2(\DD)$ be the Hardy space of holomorphic functions in $\DD$ whose 
Taylor coefficients at 0 are square summable:
\[H^2=\{f(z)=\Sigma_{k=0}^\infty a_kz^k:\quad
\|f\|_{H^2}:=\Sigma_{k=0}^\infty |a_k|^2<+\infty\}.
\]
We refer the reader to \cite{Garnett} for standard facts on Hardy spaces.
By Parseval's relation 
\begin{equation}
\label{defintH2}\|f\|_{H^2}^2=\sup_{0\leq r<1}\frac{1}{2\pi}\int_0^{2\pi} 
|f(re^{i\theta})|^2\,d\theta,
\end{equation}
and  the map
\[\Bigl(f(z)=\Sigma_{k=0}^\infty a_kz^k\Bigr)\longrightarrow
\Bigl({f^*}(e^{i\theta}):=\Sigma_{k=0}^\infty a_ke^{ik\theta}\Bigr)
\]
is an isometry from $H^2$ onto the closed subspace of $L^2$
comprised of functions whose Fourier coefficients of strictly negative
index do vanish. As is customary, we 
identify $H^2$ with this subspace so that the 
distinction  between $f$ and ${f^*}$ as well as  
$\|f\|_{H^2}$ and $\|f^*\|_2$ will disappear.
This conveniently allows one to regard members of
the Hardy class both as functions on $\DD$ and on $\TT$.
From the function-theoretic viewpoint, the correspondence $f\mapsto f^*$
is that ${f^*}(e^{i\theta})$ is almost everywhere the limit of 
$f(z)$ as $z$ tends non-tangentially to $e^{i\theta}$ within $\DD$.

We put $\bar{H}^{2,0}=\bar{H}^{2,0}(\CC\setminus\overline{\DD})$ 
for the companion Hardy space of holomorphic functions in 
$\CC\setminus\overline{\DD}$,  vanishing at infinity, whose 
Taylor coefficients there  are square summable:
\[\bar{H}^{2,0}=\{f(z)=\Sigma_{k=1}^\infty a_kz^{-k}:\quad
\|f\|_{\bar{H}^{2,0}}:=\Sigma_{k=1}^\infty |a_k|^2<+\infty\}.
\]
The map
\[\Bigl(f(z)=\Sigma_{k=1}^\infty a_kz^{-k}\Bigr)\longrightarrow
\Bigl({f^*}(e^{i\theta})=\Sigma_{k=1}^\infty a_ke^{-ik\theta}\Bigr)
\]
is an isometry from $\bar{H}^{2,0}$ onto the closed subspace of $L^2$
comprised of functions whose Fourier coefficients of non-negative index do vanish, and as before we identify $\bar{H}^{2,0}$ with the latter.
We have the orthogonal decomposition:
\begin{equation}
\label{decompP}
L^2=H^2\oplus\bar{H}^{2,0}.
\end{equation}

Clearly, $f$ belongs to $\bar{H}^{2,0}$ if and only if the function
$\check f$ given by
\begin{equation}
\label{defcheck}
\check{f}(z):=z^{-1}\overline{f(1/\bar{z})}
\end{equation}
 lies in $H^2$,
and the map  $f\mapsto \check{f}$ is an involutive
isometry of $L^2$ sending $H^2$ onto $\bar{H}^{2,0}$. Actually
$|\check{f}|=|f|$ \emph{pointwise} on $\TT$, since  
$\overline{f(1/\bar{z})}=\overline{f(z)}$ when $|z|=1$. If $f$ 
is holomorphic on $\Omega$, then $f^\sharp(z)= 
\overline{f(1/\bar{z})}$ is holomorphic on the reflection of $\Omega$
across $\TT$, and if $f$ is rational $f^\sharp$ is likewise rational of the same degree.
By inspection, we get that $f^\sharp_{|\TT}=\bar{f}_{|\TT}$ where the subscript ${|\TT}$ indicates ``restriction on $\TT$''.
We let
\begin{equation}
  \label{defRiesz}
  {\bf P}_+\bigl(\Sigma_{k\in\ZZ}\,\, a_ke^{ik\theta}\bigr)=
\Sigma_{k\geq0}\,\, a_ke^{ik\theta}\qquad\text{and}\qquad
{\bf P}_-\bigl(\Sigma_{k\in\ZZ} a_ke^{ik\theta}\bigr)=
\Sigma_{k<0} a_ke^{ik\theta}
\end{equation}
denote the so-called Riesz projections, that discard the Fourier coefficients
of strictly negative and non-negative index respectively.
Clearly ${\bf P}_+$ (resp. ${\bf P}_-$)  contractively maps $L^2$
onto $H^2$ (resp.   $\bar{H}^{2,0}$) and ${\bf P}_++{\bf P}_-=I$.
%We call ${\bf P}_+$ the \emph{analytic projection}
%and ${\bf P}_-$ the \emph{anti-analytic projection}.
Note that, 
by Cauchy's formula, ${\bf P}_\pm(f)$ can be expressed as a Cauchy integral:
\begin{equation}
\label{Cauchyproj}
{\bf P}_+(f)(z)=\frac{1}{2i\pi}\int_{\TT}\frac{f(\zeta)}{\zeta-z}d\zeta, \quad
|z|<1,\qquad
{\bf P}_-(f)(z)=\frac{1}{2i\pi}\int_{\TT}\frac{f(\zeta)}{z-\zeta}d\zeta, \quad
|z|>1.
\end{equation}

The Hardy space $H^\infty=H^\infty(\DD)$ consists of bounded holomorphic 
functions on $\DD$, endowed with the {\it sup} norm.
From \eqref{defintH2} we see that $H^\infty$ embeds contractively in $H^2$, 
in particular each $f\in H^\infty$ has
a non-tangential limit ${f^*}$ on $\TT$. It can be shown that
$\|{f^*}\|_\infty= \|f\|_{H^\infty}$, and that the map
$f\mapsto{f^*}$
is an isometry from $H^\infty$ onto the closed subspace of $L^\infty$
comprised of functions whose Fourier coefficients of strictly negative
index do vanish. 
Again we identify $H^\infty$ with this subspace.
Likewise, the space $\bar{H}^{\infty,0}$ of bounded holomorphic functions 
vanishing at infinity in 
$\CC\setminus\overline{\DD}$ identifies {\it via} non-tangential limits
with the closed subspace of $L^\infty$
consisting of functions whose Fourier coefficients of non-negative
index do vanish. 
However, in contrast with the situation for $L^2$,
the operators ${\bf P}_{\pm}$ are unbounded on $L^\infty$.
%Besides the norm topology, $H^\infty$ inherits the weak-* topology 
%from $L^\infty(\TT)$. It is characterized by 
%the fact that $f_n$ tends weak-* to $f$ if and only if
%$\int_\TT f_n\varphi\to\int_\TT f\varphi$ for every $\varphi\in L^1$.
%It is equivalent to require that $(\|f_n\|_\infty)_n$ is a bounded sequence and that,
%for each $k$, the $k$-th Fourier coefficient of $f_n$ converges to the
%$k$-th Fourier coefficient of $f$.

A fundamental fact \cite[ch. II, cor. 5.7]{Garnett} is that each
 nonzero $f\in H^2$ factors uniquely
as $f=jw$ where
\begin{equation}
\label{defext}
w(z)=\exp\left\{\frac{1}{2\pi}\int_0^{2\pi}\frac{e ^{i\theta}+z}
{e^{i\theta}-z}\log|f(e ^{i\theta})|\, d\theta\right\}
\end{equation}
belongs to $H^2$ and is called the {\em outer factor} of $f$, normalized  to be positive at zero,
while
$j\in H^\infty$ has modulus 1 a.e. on $\TT$ and is called the
{\em inner factor} of $f$. The latter decomposes further  as
$j=bS$, where
\begin{equation}
\label{defBlaschke}
b(z)=cz^k\prod_{\zeta_l\neq0}\frac{{-\bar \zeta}_l}{|\zeta_l|}\,\frac{z-\zeta_l}{1-{\bar \zeta}_lz}
\end{equation}
is the {\em Blaschke product}, with zero of multiplicity $k\geq0$ at
the origin, associated to a sequence of points
$\zeta_l\in\DD\setminus\{0\}$
and to a constant $c\in\TT$, while
$$S(z)=\exp\left\{-\frac{1}{2\pi}\int_0^{2\pi}\frac{e ^{i\theta}+z}{e
    ^{i\theta}-z}\, d\mu(\theta)\right\}$$
is the {\em singular inner factor} associated with a positive 
singular measure $\mu$
on $\TT$. 
The $\zeta_l$ are of course the zeros of $f$ in $\DD\setminus\{0\}$, counting
multiplicities by repetition. The total number of zeros, finite or infinite, 
is called the degree of the Blaschke product. When the latter is finite, the
Blaschke product is  rational and the two notions of degree that we introduced do coincide. Throughout,
we let $B_n$ denote
the set of Blaschke products of degree at most $n$. When $m<\infty$, note that $B_m\subset\cR_{m,m}$ 
is comprised of rational functions of degree at most $m$ which are
analytic in $\DD$ and have unit modulus everywhere on $\TT$. 
Alternatively, $B_m$ consists of
functions of the form $q_k/\widetilde{q}_k$  where 
$q_k\in \cP_k$ has exact degree $k\leq m$ and all its roots in $\DD$.
Clearly, $B_m$ is included in the unit sphere of both $H^2$ and~$H^\infty$. 
If the degree is infinite,
the convergence of the product in (\ref{defBlaschke})
is equivalent to the condition
\begin{equation}
\label{Bcond}
\sum_l(1-|\zeta_l|)<\infty
\end{equation}
 which holds automatically when $f\in H^2$. 
That $w(z)$ is well-defined rests on the fact that
$\log|f|\in L^1$ if $f\in H^2\setminus\{0\}$.
A function $f\in H^2$ with inner-outer factorization $f=jw$
lies in $H^\infty$ if, and only if $w\in L^\infty(\TT)$.
For simplicity, we often say that a function is outer (resp. inner) 
if it is equal to its outer (resp. inner) factor.

We further set 
\[
H_m^2:=\{\frac{g}{q_m}:\ g\in H^2,\ q_m\in \cP_m,\ q_m(z)\neq0\ \mathrm{for}\  |z|\geq1 \}.
\]
Members of $H_m^2$ identify in $L^2$ with non-tangential limits of 
meromorphic functions  with at most $m$ poles in $\DD$ 
(counting multiplicities) whose
$L^2$-means over $\{|z|=r\}$ remain eventually bounded 
as $r\to 1^-$. 
Two equivalent descriptions of $H_m^2$ are useful:
on the one hand we have that $H^2_m=B_m^{-1}H^2$, the set of quotients of $H^2$-functions by
Blaschke products of degree at most $m$, on the other hand we
get by pole-residue decomposition that
$H_m^2=H^2+(\cR_{m-1,m}\cap \bar H^{2,0})$.
In a similar vein, we put
\[
H_m^\infty:=H^2_m\cap L^\infty=B_m^{-1}H^\infty=\{\frac{g}{q_m}:\ g\in H^\infty,\ q_m\in \cP_m,\ q_m(z)\neq0\ \mathrm{for}\  |z|\geq1\}
\]
for the set of meromorphic functions with at most $m$ poles in $L^\infty$.
\section{Best rational and meromorphic approximation on the circle
  % in \texorpdfstring{$L^2$}{L2}
}
\label{Rat}
We dwell on the discussion in \cite{BCQ}.
For $n\geq1$ an integer, 
the problem of best rational approximation of degree at most $n$ in $L^2$ is:

\noindent{\bf Problem R(n):} \emph{Given $h\in L^2$,  find $r^*\in \cR_{n,n}$ such that
\[\|h-r^*\|_2=\min_{r\in \cR_{n,n}}\|h-r\|_2.\]}

If we write in view of (\ref{decompP}) the decomposition
$h=h_1+h_2$ with $h_1\in H^2$, $h_2\in\bar{H}^{2,0}$, and invoke
partial fraction expansion to decompose an arbitrary  $r\in \cR_{n,n}$
as $r_1+r_2$ with $r_1\in H^2$, $r_2\in\bar{H}^{2,0}$,
$\mbox{deg\ }r_1+\mbox{deg\ }r_2\leq n$, then by Parseval's relation
\[\|h-r\|_2^2=\|h_1-r_1\|_2^2+\|h_2-r_2\|_2^2\]
so that  R($n$) reduces, modulo a combinatorial allocation of the degrees
of $r_1$ and $r_2$ ($n+1$ choices), to a pair of problems of the
following types:

\noindent{\bf Problem RA(n):} \emph{Given $f\in H^2$, find $r^*\in \cR_{n,n}\cap H^2$ 
such that
\[\|f-r^*\|_2=\min_{r\in \cR_{n,n}\cap H^2}\|f-r\|_2.\]}

\noindent{\bf Problem RAB(n):}
\emph{Given $f\in \bar{H}^{2,0}$,  find $r^*\in \cR_{n-1,n}\cap \bar{H}^{2,0}$ 
such that
\[\|f-r^*\|_2=\min_{r\in \cR_{n-1,n}\cap \bar{H}^{2,0}}\|f-r\|_2.\]}

In ``RA($n$)'' and ``RAB($n$)'',
the letter $"A"$ is mnemonic for ``analytic''
and  $"B"$  is mnemonic
for ``bar''.

Problem RA($n$) is in fact equivalent to RAB($n$).
For we can parametrize  $r\in \cR_{n,n}\cap H^2$
as $r(0)+zr_3$ where $r(0)\in \RR$ and $r_3\in \cR_{n-1,n}\cap H^2$
vary independently, hence by Parseval's theorem
\[\|f-r\|_2^2=|f(0)-r(0)|^2+\|(f-f(0))-zr_3\|_2^2\] 
so that $r(0)=f(0)$ is the optimal choice.
Then, since multiplying by $1/z$ preserves the modulus on $\TT$,
we find upon replacing $f$ by $(f-f(0))/z$ 
that Problem RA($n$) is equivalent to the normalized version:

\noindent{\bf Problem RAN(n):} \emph{Given $f\in H^2$,  find 
$r^*\in \cR_{n-1,n}\cap H^2$ such that
\[\|f-r^*\|_2=\min_{r\in \cR_{n-1,n}\cap  H^2}\|f-r\|_2.\]}

\label{remarkReductionFromRAtoRAB}Now, applying the check operation defined in \eqref{defcheck} which
preserves $\cR_{n-1,n}$ and the degree, RAN($n$) is 
mechanically  equivalent to RAB($n$), which proves the desired equivalence.
Note that when passing from RA($n$) to RAB($n$), the initial 
$f\in H^2$, to be approximated from $\cR_{n,n}\cap H^2$,
gets transformed into  the function
$(f-f(0))^\sharp=\overline{f(1/\bar z)}-\overline{f(0)}\in \bar{H}^{2,0}$, to be approximated from
$\cR_{n-1,n}\cap\bar{H}^{2,0}$.

Finally, we also state the best meromorphic
approximation problem 
with at most $n$ poles in $L^2$: 

\noindent{\bf Problem MA(n):} 
\emph{Given $f\in L^2$,  find $g^*\in H^2_n$ 
such that
\[\|f-g^*\|_2=\min_{g\in H^2_n}\|f-g\|_2.\]} 

Problem MA($n$) is also equivalent to
RAB($n$). Indeed, since $H_n^2=H^2+(\cR_{n-1,n}\cap\bar{H}^{2,0})$,
it holds by  orthogonality of 
$H^2$ and  $\bar{H}^{2,0}$ that the $H^2$-component of a minimizer
in MA($n$) must be ${\bf P}_+(f)$ while the 
$\bar{H}^{2,0}$-component of this minimizer is a solution to
RAB($n$), with $f$ replaced by ${\bf P}_-(f)$. 

% Let us mention that best meromorphic approximation,
% unlike best rational approximation, is
% conformally invariant. This makes it of independent interest in
% a broader context, see \cite[prop. 5.4]{BMSW06} and  \cite{BSY}
% for further details.

% Having reduced all previous approximation problems to RAB($n$), 
% we quote  basic facts regarding  the latter.
It is known that RAB($n$) has a solution  which needs not be 
unique, and every solution has exact degree $n$ unless $f$ is rational 
of degree at most $n-1$ \cite{Erohin,Levin,Bar86}.

We shall write $d_2(f,\cR_{n-1,n})$ (resp. $d_2(f,\cR_{n,n})$)
for the distance from $f$ to $\cR_{n-1,n}$ (resp. $\cR_{n,n}$) in
$L^2$. Thus,  if  $f\in \bar{H}^{2,0}$, then $d_2(f,\cR_{n-1,n})$ is
both the value of  Problem RAB($n$) and of Problem MA($n$); and if
$f\in H^2$, then  $d_2(f,\cR_{n-1,n})$  (resp.  $d_2(f,\cR_{n,n})$)
is the value of problem RAN($n$) (resp. RA($n$)).
The value of MA($n$)  is denoted by $d_2(f, H^2_n)$.

Problems R($n$) and MA($n$) have natural analogs
in $L^\infty$ provided that $f$ lies in that space:  we denote by  $d_\infty(f,\cR_{n-1,n})$ (resp. $d_\infty(f,\cR_{n,n})$)
the distance from $f$ to $\cR_{n-1,n}$ (resp. $\cR_{n,n}$)  in $L^\infty$;
and by $d_{\infty}(f, H^\infty_n)$   the $L^\infty$-distance from $f$ to 
$H^\infty_n$. In $L^\infty$, however, no decomposition like \eqref{decompP}
is valid and no splitting of the rational approximation problem into analogs of
RA($n$) and RAB($n$) is available.

Of all these approximations problems, the $L^\infty$ analog of MA($n$) may seem most difficult
but still it is the only one known to reduce
to matrix analysis, namely to singular value decomposition
(at least when $f$ is rational). This is a consequence of the
Adamjan-Arov-Krein theory (in short: AAK theory), a full account of which may be found in
\cite{AAK} or \cite[Chapter 4]{Peller}. For  $f\in L^\infty$, these authors 
consider  the Hankel operator $\Gamma_f$ with symbol $f$, which is the bounded operator defined as
\begin{equation}
\label{Hankelinf}
\begin{array}{lll}
\Gamma_{f}:  H^{2} & \longrightarrow &  \bar{H}^{2,0} \\
\ \ \ \ \ \ \      v &          \mapsto & {\bf P}_-(fv), \\
\end{array}
\end{equation}
together with its singular values $s_0(\Gamma_f)\geq s_1(\Gamma_f)\geq s_2(\Gamma_f)\cdots$ given by
\begin{equation}
\label{singdefg}
s_n(\Gamma_f):=\inf \bigl\{ 
|||\Gamma_f-L|||,~~L\in\mathcal{L}_n\bigr\},
\end{equation}
where $\mathcal{L}_n$ is the collection of linear operators
$H^2\to \bar{H}^{2,0}$ with rank not exceeding $n$ 
and $|||\cdot|||$ denotes the operator norm. Now, the AAK-theorem asserts that
\begin{equation}
\label{egAAK}
d_{\infty}(f,H^\infty_n)=s_n(\Gamma_f).
\end{equation}
Moreover, if $f\in C(\TT)$  then $\Gamma_f$ is
compact,
% \cite[ch. 1, thm. 5.5]{Peller}
and
the unique best approximant $g_n$ to $f$ from $H_n^\infty$ in $L^\infty$ is
\begin{equation}
  \label{bmai}
  g_n=\frac{{\bf P}_{+}\bigl(fv_n\bigr)}{v_n}
\end{equation}
where $v_n$ is any singular vector associated with $s_n(\Gamma_f)$
(any eigenvector of $\Gamma_f^*\Gamma_f$ with eigenvalue 
$(s_n(\Gamma_f))^2$).
%the squared approximation numbers of $\Gamma_f$ 
% \cite[ch. II, thm. 2.1]{GK},
When $\Gamma_f$ is compact, recall also  that the Courant $\max\min$ principle \cite[sec. 22.11a]{ZeidlerIIA} applies:
\begin{equation}
\label{CPHilb}
s_n(\Gamma_f)=\max_{V\in\mathcal{V}_{n+1}} \min_{\stackrel{v \in V}{\|v\|_2=1}}\|\Gamma_f(v)\|_2,
\end{equation}
where $\mathcal{V}_{n+1}$ is the collection of linear subspaces of $H^2$
of complex dimension at least $n+1$.
% In this Hilbertian context, the approximation number $s_n(\Gamma_f)$ is also called the $n$-th singular value of $\Gamma_f$. We say that a function $v$ is \emph{associated with a singular value $s$} when $v$ is an eigenvector of $\Gamma_f^*\Gamma_f$ associated with the eigenvalue $s^2$: $v = s^2\Gamma_f^*\Gamma_f(v)$. As a particular case of Equation~\eqref{CPHilb} a maximizing vector is
% just a singular vector associated with~$s_0(\Gamma_f)$.
% \begin{theorem}[The AAK theorem]{\rm \cite[thms. 0.1 \& 0.2]{AAK}\cite[ch. 4, thm. 1.2]{Peller}}
% \label{AAKth}
% Let $f\in L^\infty$ and $\Gamma_f:H^2\to \bar{H}^{2,0}$ 
% be the Hankel operator with symbol $f$. 
% For each integer $n\geq0$, it holds that

% If in addition
% $f\in C(\TT)$, then 
% in particular, the right hand side of \eqref{bmai} does not  depend which singular vector $v_n$
% is used (there may be several if $s_n(\Gamma_f)$ has multiplicity greater
% than $1$). Moreover, $|f-g_n|=s_n(\Gamma_f)$ a. e. on  $\TT$ 
% \end{theorem}
%The case $n=0$ of Theorem~\ref{AAKth}, {\it i.e.} 
%that  $|||\Gamma_f|||=d_{\infty}(f,H^\infty)$ was known earlier as Nehari's theorem.

Let us  turn to problem RAB($n$), which is equivalent to MA($n$) as pointed out
already. That is, let us move from meromorphic approximation in $L^\infty$ to meromorphic approximation in $L^2$.
In this context, for $f\in  \bar{H}^{2,0}$, we define again a  Hankel operator $A_f$ with symbol $f$ 
by 
\begin{equation}
\label{Hankel21}
\begin{array}{lll}
A_{f}:  H^{\infty} & \longrightarrow &  \bar{H}^{2,0} \\
\ \ \ \ \ \ \      v &          \mapsto & {\bf P}_-(fv). \\
\end{array}
\end{equation}
Although the definitions of $A_f$ and $\Gamma_f$ are formally the same,
observe that the domains in~\eqref{Hankel21} and~\eqref{Hankelinf}
are different.
Evidently,  $A_f$ is continuous and  $|||A_f|||=\|f\|_2$. In fact $A_f$ is compact, because it is a limit of finite rank operators: indeed, since rational functions are dense in $\bar{H}^{2,0}$, we may find $r_n\in\mathcal{R}_{n-1,n}$ arbitrary close to $f$, and use Kronecker's theorem that Hankel operaors with rational symbol have finite rank \cite{Peller}.
%a unit maximizing vector being $v\equiv 1$
% Here and below,
% we let $|||.|||$ stand for the operator norm, and
%(a maximizing vector of an operator $E$ is a nonzero 
%$v$ such that $\|E (v)\|/\|v\|=|||E|||$). 

The Theorem below  is a special case of {\rm \cite[thm. 8.1]{BS02}},
but
granted its importance in the present paper and its short proof we
nevertheless provide an argument for the ease of the reader.
\begin{theorem}
\label{lemH1}
For $f\in \bar{H}^{2,0}$, it holds that
\begin{equation}
\label{minHankB1}
d_2(f,\cR_{n-1,n})=\min_{b_n\in B_n}\|A_f(b_n)\|_2.
\end{equation}
Moreover, a rational function $r_n\in\cR_{n-1,n}$ is a solution to 
{\rm RAB(}$n${\rm )} if, and only if \begin{equation}
  \label{bal2}
  r_n=\frac{{\bf P}_+(fb_n)}{b_n}
  %\qquad b_n\in B_n,\quad b_n \mathrm{\ a \ minimizer\ in\ \eqref{minHankB1}}
\end{equation}
% b_n=q_n/\widetilde{q}_n$ is
where $b_n$ is any minimizing Blaschke product in
the right hand side of \eqref{minHankB1}.
%$p_{n-1}=\widetilde{q}_n{\bf P}_+(fb_n)$.
\end{theorem}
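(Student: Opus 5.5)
The plan is to parametrize $\cR_{n-1,n}\cap\bar H^{2,0}$ by Blaschke products, and then, for a fixed denominator, to solve the resulting problem as an orthogonal projection. First, we may restrict to approximants in $\bar H^{2,0}$. Any $r\in\cR_{n-1,n}$ splits as $r_1+r_2$ with $r_1\in H^2$ and $r_2\in\cR_{n-1,n}\cap\bar H^{2,0}$. By \eqref{decompP} and Parseval, $\|f-r\|_2^2=\|r_1\|_2^2+\|f-r_2\|_2^2$, so we may take $r_1=0$.

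Next, the parametrization. A function $r\in\cR_{n-1,n}\cap\bar H^{2,0}$ has all its poles in $\DD$, at most $n$ of them. Write $r=p_{n-1}/q_n$ with $q_n\in\cP_n$ having all roots in $\DD$; if the exact degree is $k<n$, pad with extra roots. Put $b_n=q_n/\widetilde q_n\in B_n$.
\begin{itemize}
\item Then $rb_n=p_{n-1}/\widetilde q_n$ is analytic in a neighborhood of $\overline\DD$, so $r\in b_n^{-1}H^2$.
\item Conversely, let $V_{b}:=\bar H^{2,0}\cap \bar b H^2$ (on $\TT$, $\bar b=1/b$). Functions in $\bar b H^2\cap\bar H^{2,0}$ are of the form $g/b$ with $g\in H^2$, meromorphic in $\DD$ with poles among the zeros of $b$, and vanishing at infinity outside. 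They are therefore rational of type $(n-1,n)$.
\end{itemize}
Hence $\cR_{n-1,n}\cap\bar H^{2,0}=\bigcup_{b_n\in B_n}V_{b_n}$, where $V_{b_n}=\bar b_n(H^2\ominus b_nH^2)$ is the finite-dimensional model space rotated by $\bar b_n$. In particular $V_{b_n}$ is a closed subspace.

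Now fix $b_n$ and minimize over $V_{b_n}$. Multiplication by $b_n$ is an isometry of $L^2$, since $|b_n|=1$ on $\TT$. Therefore
\[\min_{r\in V_{b_n}}\|f-r\|_2=\min_{g\in H^2,\ g/b_n\in\bar H^{2,0}}\|fb_n-g\|_2 .\]
Without the constraint $g/b_n\in\bar H^{2,0}$, the minimum over $g\in H^2$ is attained exactly at $g={\bf P}_+(fb_n)$, with value $\|{\bf P}_-(fb_n)\|_2=\|A_f(b_n)\|_2$. It remains to check that this $g$ satisfies the constraint, i.e. that $r={\bf P}_+(fb_n)/b_n\in\bar H^{2,0}$. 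We have $r=f-{\bf P}_-(fb_n)\bar b_n$. The first term lies in $\bar H^{2,0}$. For the second, ${\bf P}_-(fb_n)\bar b_n$ is orthogonal to $H^2$: for $h\in H^2$, $\langle {\bf P}_-(fb_n)\bar b_n,h\rangle=\langle {\bf P}_-(fb_n),b_nh\rangle=0$. So $r\in\bar H^{2,0}$, and by the previous paragraph $r\in V_{b_n}$.

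We conclude that the best approximant in $V_{b_n}$ is unique, equal to ${\bf P}_+(fb_n)/b_n$, with error $\|A_f(b_n)\|_2$. Taking the infimum over $b_n\in B_n$ gives \eqref{minHankB1}, with an infimum in place of a minimum. It also gives the characterization \eqref{bal2}, in both directions:
\begin{itemize}
\item If $r_n$ solves RAB($n$), take $b_n$ from its denominator. Then $r_n$ is optimal in $V_{b_n}$, so $r_n={\bf P}_+(fb_n)/b_n$, and $\|A_f(b_n)\|_2=d_2(f,\cR_{n-1,n})$, so $b_n$ is a minimizer.
\item Conversely, a minimizing $b_n$ yields the approximant ${\bf P}_+(fb_n)/b_n$, with error equal to the minimum.
\end{itemize}

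The main obstacle is attainment of the minimum over $B_n$, which is equivalent to the known existence of solutions to RAB($n$) quoted from \cite{Erohin,Levin,Bar86}. We can invoke that result directly. Alternatively, we can argue by compactness: $B_n$ is compact in the topology of locally uniform convergence in $\DD$ once degenerate limits are allowed, and zeros escaping to $\TT$ collapse to unimodular constants, so the limit is still in $B_n$ with lower degree. The map $b\mapsto\|{\bf P}_-(fb)\|_2$ is continuous along such sequences, because ${\bf P}_-(fb)$ only involves finitely many pairings if we first approximate $f$ by rational functions, with a uniform error since $|b|\le1$. This gives the minimum.
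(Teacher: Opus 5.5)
Your proof is correct and is essentially the paper's argument. Write the denominator as $b_n=q_n/\widetilde q_n$, use that multiplication by $b_n$ is an isometry of $L^2$, and take the orthogonal projection ${\bf P}_+(fb_n)$ for the numerator; attainment then comes from the known existence of solutions to RAB($n$), and your compactness argument is a valid self-contained substitute for that citation.

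The one step you only assert is that $\bar H^{2,0}\cap\bar b_nH^2$ consists of functions $p_{n-1}/q_n$. It is settled by exactly the paper's degree count: for such $r$, $rq_n=(rb_n)\widetilde q_n$ lies in $H^2\cap z^n\bar H^{2,0}=\cP_{n-1}$.
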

\begin{proof}
Let us parametrize 
$r\in \cR_{n-1,n}\cap \bar{H}^{2,0}$ as $r=p_{n-1}/q_n$
where $p_{n-1}$ ranges over $\cP_{n-1}$ and $q_n$ ranges over those polynomials
in $\cP_n$ whose roots all lie in $\DD$.
Then $q_n/\widetilde{q}_n\in B_n$, so multiplication by  $q_n/\widetilde{q}_n$  is an isometry in $L^2$, and since 
$p_{n-1}/\widetilde{q}_n\in H^2$ we get by
orthogonality of  $H^2$ and  $\bar{H}^{2,0}$ on using that ${\bf P}_++{\bf P_-}=I$: 
\begin{equation}
\label{multB}
\|f-\frac{p_{n-1}}{q_n}\|_2^2=\|f\frac{q_n}{\widetilde{q}_n}-
\frac{p_{n-1}}{\widetilde{q}_n}\|_2^2=
\|{\bf P}_-(f\frac{q_n}{\widetilde{q}_n})\|_2^2+
\|{\bf P}_+(f\frac{q_n}{\widetilde{q}_n})
-\frac{p_{n-1}}{\widetilde{q}_n}\|_2^2.
\end{equation}
Clearly the product of a $\bar{H}^{2,0}$-function by a polynomial in $\cP_n$
yields a member of $z^n\bar{H}^{2,0}$. Therefore
\begin{equation}
\label{minp}
\widetilde{q}_n{\bf P}_+(f\frac{q_n}{\widetilde{q}_n})=
fq_n-\widetilde{q}_n{\bf P}_-(f\frac{q_n}{\widetilde{q}_n})\in 
z^n\bar{H}^{2,0}\cap H^2=\cP_{n-1},
\end{equation}
implying that $p_{n-1}=\widetilde{q}_n{\bf P}_+(fq_n/\widetilde{q}_n)$
is the minimizing choice in \eqref{multB} for fixed $q_n$.
Consequently 
\begin{equation}
\label{vpreHank}
\min_{r\in \cR_{n-1,n}\cap \bar{H}^{2,0}}\|f-r\|_2=
\min_{q_n\in \cP_n, \mathcal{Z}(q_n)\subset\DD}
\|{\bf P}_-(f\frac{q_n}{\widetilde{q}_n})\|_2=
\min_{b_n\in B_n}
\|{\bf P}_-(fb_n)\|_2=\min_{b_n\in B_n}\|A_f(b_n)\|_2.
\end{equation}
That the infimum is indeed attained in \eqref{vpreHank}
follows from \eqref{multB} and the fact that RAB($n$) has a solution.
\end{proof}
We digress at this point to stress how Theorem \ref{lemH1} parallels
AAK theory: we may define the singular values  $s_n(A_f)$ by
\eqref{singdefg}, only with $A_f$ instead of $\Gamma_f$ and $H^\infty$
instead of $H^2$\footnote{In \cite[Section 8]{BS02} it is required in this case that members of $\mathcal{L}_n$ should be weak-$*$ continuous $ H^{\infty} \to\bar{H}^{2,0}$, however this is redundant because clearly we may restrict to continuous
  operators, and those  of  finite rank are automatically weak-$*$ continuous.},
and still  $d_2(f,\cR_{n-1,n})=s_n(A_f)$; {\it i.e.} the natural analog of \eqref{egAAK} holds. However,
a direct analog of the Courant max-min principle \eqref{CPHilb}
does not hold when we replace $\Gamma_f$ by $A_f$ and $H^2$ by $H^\infty$ in the definition of $\mathcal{V}_{n+1}$.
But still, a nonlinear analog of the max-min principle does hold if the collection of all Euclidean unit spheres of real dimension $2n+1$ in
$H^2$, over which the maximizing step is made in \eqref{CPHilb}, gets replaced by the collection of all weak-$*$ closed symmetric subsets of the unit sphere
of $H^\infty$ whose genus is at least $2n+2$\footnote{The genus of a symmetric set $K$ (meaning that $-v\in K$ when $v\in K$) is the smallest positive integer for which there is  an odd continuous mapping
  $K\to\RR^n\setminus\{0\}$ \cite{ZeidlerIIA}; by the Borsuk-Ulam theorem,
  Euclidean spheres of real dimension $2n+1$ have genus $2n+2$.}. Specifically,
if we let $\mathcal{K}_{2n+2}^\infty$ denote the collection of all such
sets, it holds that
\begin{equation}
\label{minmax1}
s_n(A_f)=\max_{K \in {\cal K}_{2n+2}^\infty} \min_{u \in K}\|A_f(u)\|_2,\qquad n=0,1,2,...,
\end{equation}
and \eqref{minHankB1}  indicates that $B_n$ is an optimal weak-$*$ closed subset
of the unit sphere of $H^\infty$ with genus no less than (in fact equal to) $2n+2$ in \eqref{minmax1}. Moreover, if $b_n$ is a minimizing
Blaschke product  in \eqref{minmax1}, formula
\eqref{bal2}
% $b_n^{-1}{\bf P}_+(fb_n)$ for the best rational (and meromorphic) approximant to $f$ given in Theorem \ref{lemH1}
stands analog to \eqref{bmai}.
We refer the reader to \cite{BS02} for an account of best meromorphic approximation in $L^p$ for $2\leq p\leq\infty$ along these lines.

\section{$H^2$  rational approximation of Blaschke products: lower bounds}
\label{Lbounds}
%In the present paper, we use \eqref{minHankB1}
% the maximizing step in \eqref{minmax}  together with Theorem~\ref{singlust}
%to derive lower bounds for  Problems RAB($n$).
When $h$ is continuous of constant modulus on $\TT$, the unique best
approximant to $h$ from $\mathcal{R}_{m,n}$ in $L^\infty$
must be zero unless $-n\leq W(h)\leq m$. Indeed, if $r_{m,n}\in \mathcal{R}_{m,n}$
is such that $|h-r_{m,n}|<|h|$ on $\TT$, then $[0,1]\ni t\mapsto h+t(r_{m,n}-h)$
is a nevervanishing homotopy between $h$ and $r_{m,n}$, therefore these must have the same index and yet  $-n\leq W(r_{m,n})\leq m$, by \eqref{indmero}.
In particular, for $f\in B_N$ with $N$ finite, the solution to problem RA($n$)  
is zero when $N>n$; that is to say, $d_\infty(f,\cR_{n,n})=1$  if $n<N<\infty$.
The same is in fact true when $N=\infty$, though in this case
$f$ is no longer continuous and the argument needs adjustment, see \cite[Lemma 1]{BCQ}.

The situation is different if we consider best rational approximation in $L^2$. Indeed, if $f\in B_N$ then  $d_2(f,\cR_{n,n})<1$, because
any solution to RAB($n$) has exact degree $n$ unless $N<n$, as mentioned 
in Section \ref{Rat}; hence, zero is never a  best approximant. Of course, since approximation in $L^\infty$ is ineffective, one may surmise that
$d_2(f,\cR_{n,n})$ cannot be small when $f\in B_N$ and $n$ is much smaller than $N$,
but this suspicion calls for quantification. Now, if $(\zeta_j)_{1\leq j\leq N}$
is the sequence of zeros of $f\in B_N$ with moduli arranged in nondecreasing order
($N$ may be infinite), it can be shown building
on Theorem \ref{lemH1} and AAK theory that $d_2(f,\cR_{n,n})$ is no less
than $\sqrt{(1-|\zeta_{n+1}|^2)/(n+1)}$ provided that $n<N$; see
\cite[Theorem 5]{BCQ}. Since $(1-|\zeta_{n+1}|)$ can be the general term of
a convergent series with positive, non-increasing but otherwise arbitrary
summands, this estimate
gives one an appraisal of how slow the decay of $d_2(f,\cR_{n,n})$ to zero
can be as $n\to\infty$ when $N$ is infinite. Still it is unclear how sharp this
lower bound, and when $N$ is finite it gives no information
on the speed of $L^2$-approximation as $n$ increases to $N$ (when $n>N$, the best approximant is of course $f$ itself). This is the question that we take up now. We first consider the particular case where $f=z^N$, which is of special
significance from a system-theoretic viewpoint, because it quantifies how
poorly discrete-time delay systems  lend themselves to model reduction.

\begin{theorem}
  \label{initp}
  Let $0\leq n<N$ be integers. Then,
  \begin{equation}
    \label{lbinfi}
    d_2(z^N,\cR_{n,n})\geq \sqrt{1-\frac{n}{N}}.
    \end{equation}
  \end{theorem}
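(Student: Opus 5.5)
The plan is to reduce the problem to a Hankel-type minimization over Blaschke products via Theorem \ref{lemH1}. Then I would bound the relevant tail of Fourier coefficients using the Br\'ezis formula \eqref{degreeW12}.

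\emph{Reduction to RAB($n$).} By the discussion in Section \ref{Rat}, Problem RA($n$) for $f\in H^2$ is equivalent to Problem RAB($n$) for $(f-f(0))^\sharp$. Both problems have the same value, since the check operation and multiplication by $1/z$ are isometries that preserve $\cR_{n-1,n}$. Here $f=z^N$ with $N\geq1$, so $f(0)=0$ and $(f-f(0))^\sharp=z^{-N}\in\bar H^{2,0}$. Hence
\[d_2(z^N,\cR_{n,n})=d_2(z^{-N},\cR_{n-1,n}).\]
For $n=0$ the claim is trivial, since the best approximant is the constant $f(0)=0$ and the distance is $1$. So I assume $n\geq1$. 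Theorem \ref{lemH1} then gives
\[d_2(z^{-N},\cR_{n-1,n})^2=\min_{b\in B_n}\|{\bf P}_-(z^{-N}b)\|_2^2 .\]

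\emph{Coefficient computation.} Write $b(e^{i\theta})=\sum_{k\geq0}a_ke^{ik\theta}$ for $b\in B_n$. Then $z^{-N}b$ has coefficient $a_k$ at index $k-N$, so
\[\|{\bf P}_-(z^{-N}b)\|_2^2=\sum_{k<N}|a_k|^2=1-\sum_{k\geq N}|a_k|^2,\]
using $\|b\|_2=1$ because $|b|=1$ on $\TT$. It therefore suffices to show that $\sum_{k\geq N}|a_k|^2\leq n/N$ for every $b\in B_n$.

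\emph{Key step: Br\'ezis.} A finite Blaschke product is smooth on $\TT$ and unimodular, so it lies in $W^{1/2,2}$ and \eqref{degreeW12} applies. By \eqref{indmero}, $W(b)$ equals the number of zeros of $b$ in $\DD$, which is at most $n$. Since all $a_k$ with $k<0$ vanish, every term $k|a_k|^2$ is nonnegative, and therefore
\[n\geq W(b)=\sum_{k\geq0}k|a_k|^2\geq N\sum_{k\geq N}|a_k|^2 .\]
This yields the needed bound and hence \eqref{lbinfi}.

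The only point requiring care is the bookkeeping in the reduction from RA($n$) to RAB($n$): the degree shift from $\cR_{n,n}$ to $\cR_{n-1,n}$, and the conjugation turning $z^N$ into $z^{-N}$. The analytic heart of the proof is simply the Br\'ezis identity, and I do not expect a real obstacle there.
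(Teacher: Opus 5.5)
Your proof is correct and follows essentially the same route as the paper: you reduce via Theorem \ref{lemH1} to $\min_{b\in B_n}\sum_{k<N}|a_k|^2$ and conclude with the Br\'ezis formula. The only difference is an index shift. You apply Br\'ezis to $b$ itself ($W(b)=\deg b\le n$) and bound the tail $\sum_{k\ge N}|a_k|^2\le n/N$, whereas the paper applies it to $z^{-N}b_n$ (winding number $n-N$) and bounds $\sum_{k<N}|a_k|^2$ directly; your inequality $W(b)\le n$ also handles the case $\deg b<n$ more transparently than the paper's side remark.
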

  \begin{proof}
  It follows from  Theorem \ref{lemH1} and the discussion of Problem RAN($n$) in Section \ref{Rat} that
  \begin{equation}
    \label{expverr1}
    d_2(z^N,\cR_{n,n})= d_2(z^{-N},\cR_{n-1,n})=\min_{b_n\in B_n}\|A_{z^{-N}}(b_n)\|_2=\min_{b_n\in B_n}
    \|{\bf P}_-(z^{-N}b_n)\|_2.
  \end{equation}
  Pick $b_n\in B_n$ and write its Fourier expansion as $b_n=\sum_{k=0}^{+\infty}a_k z^k$.  Then, \eqref{defRiesz} and Parseval's identity yield 
  \begin{equation}
    \label{estgen}
\|{\bf P}_-(z^{-N}b_n)\|_2^2=\sum_{k=0}^{N-1} |a_k|^2.
\end{equation}
In another connection, since $e^{-iN\theta}b_n(e^{i\theta})=\sum_{k=0}^{+\infty}a_k e^{i(k-N)\theta}$,
we get from \eqref{indmero} and \eqref{degreeW12} that
\begin{equation}
  \label{detind1}
  W(z^{-N}b_n)=n-N=\sum_{k=0}^{N-1}(k-N)|a_k|^2+\sum_{k=N}^{+\infty}(k-N)|a_k|^2;
\end{equation}
note that \eqref{detind1} is valid even if $b_n$ has a zero at zero, because the quantity $n-N$ remains unchanged if $z^N$ and $b_n$ undergo  the same drop
in degree.
From \eqref{detind1} we deduce at once that
\[
N-n\leq \sum_{k=0}^{N-1}(N-k)|a_k|^2,
  \]
  whence
  \begin{equation}
    \label{conppur}
        1-\frac{n}{N}\leq \sum_{k=0}^{N-1}(1-\frac{k}{N})|a_k|^2\leq
    \sum_{k=0}^{N-1}|a_k|^2.
  \end{equation}
  Inequality \eqref{lbinfi} now follows from \eqref{conppur}, \eqref{estgen} and \eqref{expverr1}.
    \end{proof}

    We turn to the case where $f$ is an element of $B_N$ different from $z^N$,
    which is more involved. We shall introduce 
    the maximum modulus of the zeros of $f$ as a parameter,  and resort to estimates proven in  the appendix  to establish the following lemma.
    Recall the polylogarithm $\operatorname{Li}_{\beta}$ of index $\beta$ which is the function defined for $|x|<1$ by 
    $\operatorname{Li}_{\beta}(x):=\sum_{\ell\ge1}x^{\ell}/\ell^{\beta}$.
    \begin{lem}
      \label{coeffprod}
      Let $0\leq n<N$ be integers, $f\in B_N$ and $b_n\in B_n$. Assume that the modulus of the zeros of $f$ does not exceed $\lambda\in(0,1)$, and put $\alpha_0:=\frac{1-\lambda}{1+\lambda}$. If $\sum_{k\in\ZZ}a_ke^{ik\theta}$ is the Fourier expansion of $f^\sharp b_n$, then
      for each $\alpha\in(0,\alpha_0)$ the following inequality holds:
\begin{equation}
        \label{estcoeff}
      \left|a_k\right|\leq \inf_{\beta\in[0,1]}\left\{\frac{(n+1)^{\beta/2}}{(s^*)^{|k|-1}}\left(\frac{s^*-\lambda}{1-\lambda s^*}\right)^N
      \sqrt{\operatorname{Li}_{\beta}\!\big((s^*)^{-2}\big)}\right\}\qquad \mathrm{for\ }k\leq-\frac{N}{\alpha},
\end{equation}
            where
      \begin{equation}
        \label{defs}
        s^{*}:=\frac{\alpha^{-1}-1+(\alpha^{-1}+1)\lambda^{2}}{2\lambda\alpha^{-1}
        +\sqrt{\left(\frac{\alpha^{-1}-1+(\alpha^{-1}+1)\lambda^{2}}{2\lambda\alpha^{-1}}\right)^{2}-1}}
        \ \ \in (1,1/\lambda).
        \end{equation}

Note that $s^*\in(1,1/\lambda)$; hence $0<s^*-1<\lambda^{-1}-1=(1-\lambda)/\lambda$, in particular
$s^*\downarrow 1$ as $\lambda\uparrow 1$ while $\alpha_0=(1-\lambda)/(1+\lambda)\downarrow0$.

      \end{lem}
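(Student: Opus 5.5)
The plan is to write $f^\sharp b_n$ as a product of two power series, bound the coefficients of the $f^\sharp$ factor by a Cauchy estimate on a circle of radius $1/s$ with $s\in(1,1/\lambda)$, bound the coefficients of $b_n$ in a weighted $\ell^2$ norm using the Br\'ezis formula \eqref{degreeW12}, and combine the two by Cauchy–Schwarz before optimizing in $s$.

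\textbf{Step 1: the two factors.} On $\TT$ we have $f^\sharp=\bar f=1/f$, and $f^\sharp(z)=\overline{f(1/\bar z)}$ is rational. Its poles are the $\zeta_j$, all with $|\zeta_j|\le\lambda$, and it is bounded at infinity. Hence $f^\sharp$ has a Laurent expansion $\sum_{m\le0}d_m z^m$, convergent for $|z|>\lambda$. Writing $b_n=\sum_{j\ge0}c_jz^j$, the Fourier coefficients of $f^\sharp b_n$ are
\[a_k=\sum_{j\ge0}c_j\,d_{k-j}.\]
For $k<0$ only indices $k-j\le k<0$ occur, so we only need the coefficients $d_{-m}$ with $m\ge|k|$.

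\textbf{Step 2: Cauchy estimate for $f^\sharp$.} Fix $s\in(1,1/\lambda)$ and integrate over the circle $|z|=1/s$, which lies in $\{|z|>\lambda\}$. This gives $|d_{-m}|\le M(s)\,s^{-m}$, where $M(s)=\max_{|z|=1/s}|f^\sharp(z)|$. For each Blaschke factor, a direct bound on $|1-\bar\zeta z|$ and $|z-\zeta|$ with $|\zeta|\le\lambda$ and $|z|=1/s$ (or $|z|=s$ after reflection) controls the factor by a Möbius-type expression in $s$ and $\lambda$. This yields $M(s)\le\bigl(\frac{s-\lambda}{1-\lambda s}\bigr)^N$. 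The exact sign in the numerator is something I would check carefully: it depends on which side of $\TT$ one evaluates, and the target is the expression in \eqref{estcoeff}.

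\textbf{Step 3: weighted Cauchy–Schwarz.} Since $b_n$ is inner of degree at most $n$, we have $\sum_j|c_j|^2=1$. By \eqref{degreeW12} and \eqref{indmero}, $\sum_j j|c_j|^2=W(b_n)\le n$. Because $t\mapsto t^\beta$ is concave for $\beta\in[0,1]$, Jensen's inequality gives
\[\sum_j(j+1)^\beta|c_j|^2\le\Bigl(\sum_j (j+1)|c_j|^2\Bigr)^\beta\le(n+1)^\beta .\]
Applying Cauchy–Schwarz with weights $(j+1)^{\pm\beta}$ then gives
\[|a_k|\le (n+1)^{\beta/2}\,M(s)\Bigl(\sum_{j\ge0}(j+1)^{-\beta}s^{-2(|k|+j)}\Bigr)^{1/2}
=\frac{(n+1)^{\beta/2}}{s^{|k|-1}}\,M(s)\sqrt{\operatorname{Li}_\beta(s^{-2})}.\]
Taking the infimum over $\beta\in[0,1]$ produces the stated shape of \eqref{estcoeff}, valid for any admissible $s$.

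\textbf{Step 4: choice of $s$ (main obstacle).} When $|k|\ge N/\alpha$, the dominant part of the bound is $g(s):=s^{-N/\alpha}\bigl(\frac{s-\lambda}{1-\lambda s}\bigr)^N$. Since $s>1$, replacing $|k|$ by $N/\alpha$ only weakens the bound. I would choose $s=s^*$ minimizing $g$ on $(1,1/\lambda)$. Setting $(\log g)'=0$ gives the equation
\[\frac{1}{s-\lambda}+\frac{\lambda}{1-\lambda s}=\frac{1}{\alpha s},\]
which, after clearing denominators, is the quadratic $\lambda s^2-\bigl(\alpha^{-1}-1+(\alpha^{-1}+1)\lambda^2\bigr)s+\lambda=0$. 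Its root in $(1,1/\lambda)$ is \eqref{defs}. The remaining work is to verify the following:
\begin{itemize}
\item the discriminant is positive and the chosen root lies in $(1,1/\lambda)$ exactly when $\alpha<\alpha_0=(1-\lambda)/(1+\lambda)$ (at $\alpha=\alpha_0$ the root equals $1$);
\item that root is a minimum of $g$;
\item the algebra for the right branch matches \eqref{defs}.
\end{itemize}
I expect this bookkeeping, together with getting the precise form of $M(s)$ in Step 2, to be the main obstacle.
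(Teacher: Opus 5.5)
Your Steps 1--3 are the paper's argument. The paper writes $a_k=\sum_{m\ge0}\overline{\widehat f(-k+m)}\,\widehat b_n(m)$ (your $d_{-m}$ is $\overline{\widehat f(m)}$) and applies Cauchy--Schwarz with weights $(m+1)^{\pm\beta}$. It bounds $\sum_m(m+1)^\beta|\widehat b_n(m)|^2\le(n+1)^\beta$ by H\"older and the Br\'ezis formula, which is your Jensen step. It then takes the coefficient bound for $f$ from Proposition~\ref{estFTnpa}, a Cauchy estimate on $|z|=s^*$ that becomes your estimate on $|z|=1/s$ after reflection.

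The one real gap is the constant in Step 2. Suppose you bound $|z-\zeta|$ and $|1-\bar\zeta z|$ separately, which is what a ``direct bound'' on each suggests. Then on $|z|=s>1$ you only get $|z-\zeta|/|1-\bar\zeta z|\le\frac{s+|\zeta|}{1-|\zeta|s}$, and the same on $|z|=1/s$ for the factors of $f^\sharp$. This is strictly larger than $\frac{s-\lambda}{1-\lambda s}$ and does not give \eqref{estcoeff}. The sharp value requires treating numerator and denominator together. With $r=|\zeta|<1/s$ and $t=\mathrm{Re}(\bar\zeta z)\in[-rs,rs]$,
\[
\Bigl|\frac{z-\zeta}{1-\bar\zeta z}\Bigr|^2=\frac{s^2+r^2-2t}{1+r^2s^2-2t}=1+\frac{(s^2-1)(1-r^2)}{1+r^2s^2-2t}.
\]
This increases with $t$, so the maximum is attained at $z=s\zeta/|\zeta|$ and equals $\bigl(\frac{s-r}{1-rs}\bigr)^2$. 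Since $\frac{d}{dr}\frac{s-r}{1-rs}=\frac{s^2-1}{(1-rs)^2}>0$, each factor is at most $\frac{s-\lambda}{1-\lambda s}$. This is the ``well known'' fact the appendix invokes. So the sign is not a matter of which side of $\TT$ you evaluate on; it comes from bounding the two moduli jointly.

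Step 4 does more than you need. The bound at the end of Step 3 holds for every $s\in(1,1/\lambda)$ and every $k<0$; the restriction $k\le-N/\alpha$ only matters for the bound to be small. The lemma therefore follows by substituting $s=s^*$, and the fact that $s^*$ minimizes $g$ plays no role. In Proposition~\ref{estFTnpa} the monotonicity analysis only serves to show that the resulting bound is $<1$.

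What you must check is $s^*\in(1,1/\lambda)$, and this is quick. Write $s^*=\gamma+\sqrt{\gamma^2-1}$ with $\gamma=\frac{\alpha^{-1}-1+(\alpha^{-1}+1)\lambda^2}{2\lambda\alpha^{-1}}$; this is the intended reading of \eqref{defs}, whose braces are misplaced (compare Proposition~\ref{estFTnpa}). Then $s^*$ is the larger root of $q(s)=s^2-2\gamma s+1$.
\begin{itemize}
\item For the lower bound: $\gamma>1\iff\alpha^{-1}(1-\lambda)^2>1-\lambda^2\iff\alpha<\alpha_0$, and since the roots have product $1$, this gives $s^*>1$.
\item For the upper bound: $\gamma<1/\lambda$ and $q(1/\lambda)=\alpha(\lambda^{-2}-1)>0$, so $1/\lambda$ lies to the right of the larger root and $s^*<1/\lambda$.
\end{itemize}

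Finally, your quadratic has a slip. Clearing denominators in $\frac{1}{s-\lambda}+\frac{\lambda}{1-\lambda s}=\frac{1}{\alpha s}$ gives $\alpha^{-1}\lambda s^2-\bigl(\alpha^{-1}-1+(\alpha^{-1}+1)\lambda^2\bigr)s+\alpha^{-1}\lambda=0$, which is $q(s)=0$. It does not give $\lambda s^2-(\cdots)s+\lambda=0$, which would not reproduce the denominator $2\lambda\alpha^{-1}$.
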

      \begin{proof}
        Denoting by $\widehat{f}(j)$  and $\widehat{b}_n(j)$, $j\in\mathbb{N}$,
        the Fourier coefficients of $f$ and $b_n$ respectively (the Fourier coefficiens of negative index do vanish), one can check that the Fourier coefficient of $f^\sharp b_n$ with index $k<0$  may be written as
        \[
a_k=\sum_{j=-\infty}^k\overline{\widehat{f}(-j)}\,\widehat{b}_n(k-j)
\]
% so by the \rev{Cauchy--Schwarz inequality} and since $\sum_k\rev{\abs{\widehat{b}_n(k)}^2}=1$ we obtain:
% \[
% |a_k|\leq \left(\sum_{j=-k}^{+\infty}\left|\widehat{f}(j) \right|^2  \right)^{1/2}.
% \]
and re-indexing the convolution with $m:=k-j\geq 0$ yields 
\[
a_k=\sum_{m\ge 0}\overline{\widehat{f}(-k+m)}\,\widehat{b}_n(m).
\]
Let us pick  $\beta\in[0,1]$ and apply the Cauchy--Schwarz inequality with weight $\{(m+1)^{\beta}\}_{m\in\mathbb{N}}$, so as  to obtain
\begin{equation}
  \label{wCScoeffprod-beta}
|a_k|\le \left(\sum_{m\ge 0}(m+1)^{\beta}\abs{\widehat{b}_n(m)}^2\right)^{1/2}
\left(\sum_{m\ge 0}\frac{\abs{\widehat{f}(-k+m)}^2}{(m+1)^{\beta}}\right)^{1/2}.
\end{equation}
Since
$\sum_{m\ge0}\abs{\widehat b_n(m)}^2=1$  and
$\sum_{m\ge0}(m+1)\abs{\widehat b_n(m)}^2\leq n+1$ with equality if $b_n$ has exact degree $n$ (by \eqref{degreeW12}), we get from H\"older's inequality that 
\[
\sum_{m\ge 0}(m+1)^{\beta}\abs{\widehat{b}_n(m)}^2
\le \Big(\sum_{m\ge0}\abs{\widehat b_n(m)}^2\Big)^{1-\beta}
\Big(\sum_{m\ge0}(m+1)\abs{\widehat b_n(m)}^2\Big)^{\beta}
\leq(n+1)^{\beta}.
\]
Consequently, \eqref{wCScoeffprod-beta} implies for any $\beta\in[0,1]$ that
\begin{equation}
  \label{wCScoeffprod-beta2}
|a_k|\le (n+1)^{\beta/2}\left(\sum_{j=-k}^{+\infty}\frac{\abs{\widehat{f}(j)}^2}{(j+k+1)^{\beta}}\right)^{1/2}.
\end{equation}
Now, Proposition~\ref{estFTnpa}  provides us with the geometric bound:
\[
\abs{\widehat f(j)}\leq C_{\lambda,\alpha}(s^{*})^{-j}\qquad\text{for all } j\geq \alpha^{-1}N,
\qquad C_{\lambda,\alpha}=\left(\frac{s^*-\lambda}{1-\lambda s^*}\right)^N.
\]
Since $k\leq -N/\alpha$ by assumption, one has $j\geq \alpha^{-1}N$ in the sum
between parentheses on the right hand side of \eqref{wCScoeffprod-beta2} and
therefore, letting $L:=-k$, the previous bound applies to yield
\[
\sum_{m\ge0}\frac{\abs{\widehat f(L+m)}^2}{(m+1)^{\beta}}
\le C^2(s^*)^{-2L}\sum_{m\ge0}\frac{(s^*)^{-2m}}{(m+1)^{\beta}}
= C^2(s^*)^{-2L}(s^*)^2\,\operatorname{Li}_{\beta}\!\big((s^*)^{-2}\big),
\]
where $\operatorname{Li}_{\beta}$ is the polylogarithm
of index $\beta$.
%is defined by $\operatorname{Li}_{\beta}(x):=\sum_{\ell\ge1}x^{\ell}/\ell^{\beta}$ for $|x|<1$.
Plugging this into \eqref{wCScoeffprod-beta2} yields the following estimate, valid for any $\beta\in[0,1]$:
\begin{equation}
  \label{estcoeff-weighted-beta}
\left|a_k\right|\leq \frac{(n+1)^{\beta/2}}{(s^*)^{|k|-1}}\left(\frac{s^*-\lambda}{1-\lambda s^*}\right)^N
\sqrt{\operatorname{Li}_{\beta}\!\big((s^*)^{-2}\big)}\qquad \mathrm{for\ }k\leq-\frac{N}{\alpha},
\end{equation}
which achieves the proof.
\end{proof}
\begin{remark}
  The Cauchy--Schwarz inequality in \eqref{wCScoeffprod-beta}
  is far from being an equality in most cases, which is  detrimental
  to the upper   bound in    \eqref{estcoeff}.
  %since the Fourier expansions of $f$ and $b_n$ cannot be
  % nearly proportional.
  If one is  willing to choose as parameters the zeros of $f$ and not just their maximum modulus $\lambda$, an estimate based on the partial fraction
  expansion of $f^\sharp b_n$   may be sharper.
\end{remark}
Lemma \ref{coeffprod} calls for determining the optimal $\beta$ in the right hand side of \eqref{estcoeff}. To guess, observe since  $\operatorname{Li}_0(x)=x/(1-x)$
and $\operatorname{Li}_1(x)=-\log(1-x)$ that the quantity between braces in \eqref{estcoeff} is
\begin{equation}
        \label{estcoeff1}
              \frac{1}{(s^*)^{|k|-1}}\left(\frac{s^*-\lambda}{1-\lambda s^*}\right)^N
        \frac{1}{\sqrt{(s^*)^2-1}}
        \qquad \mathrm{for\ }\beta=0
      \end{equation}
      and
\begin{equation}
  \label{estcoeff2}
  \frac{(n+1)^{1/2}}{(s^*)^{|k|-1}}
    \left(\frac{s^*-\lambda}{1-\lambda s^*}\right)^N
        \sqrt{\log\frac{(s^*)^2}{(s^*)^2-1}} \qquad \mathrm{for\ }\beta=1.
  \end{equation}
Glancing at \eqref{estcoeff1}-\eqref{estcoeff2},  it appears that the best choice for $\beta$ will much depend on $\lambda$, $\alpha$ and $n$. To investigate the matter further, 
note that the upper bound in \eqref{estcoeff-weighted-beta} has
an exponential factor $\frac{1}{(s^*)^{|k|-1}}\left(\frac{s^*-\lambda}{1-\lambda s^*}\right)^N$ which is independent of $\beta$. Hence, factoring out 
$(s^*)^{-2}$, we find that  for fixed  $(n,\lambda,\alpha)$,
the optimal $\beta$ should minimize the quantity
\begin{equation}
  \label{defKb}
K_{\beta}(n;x):=(n+1)^{\beta/2}\sqrt{S_{\beta}(x)},
\end{equation}
where we have set  $x:=(s^*)^{-2}\in(0,1)$ and $S_{\beta}(x):=\sum_{m\ge0}x^m/(m+1)^{\beta}$.
Let us put for simplicity $\Phi(\beta):=\log K_{\beta}(n;x)^2=\beta\log(n+1)+\log S_{\beta}(x)$.
Since $\beta\mapsto \log S_{\beta}(x)$ is strictly convex (the second derivative is
clearly strictly positive) , so is $\Phi$ on $[0,1]$ and thus it admits a unique minimizer
$\beta_{\ast}(n;x)\in[0,1]$. Hence, $\beta_{\ast}(n;x)$ is characterized by $\Phi'(\beta_{\ast})=0$ whenever it lies in $(0,1)$, and
differentiating termwise gives us
\[
\Phi'(\beta)=\log(n+1)-\mathbb{E}_{\beta}\!\big[\log(m+1)\big],
\qquad
\left(\mathbb{P}_{\beta}\{m\}\right)_{m\in\mathbb{N}}\ \propto\ \left(\frac{x^m}{(m+1)^{\beta}}\right),
\]
where the probabilistic notation is simply for convenience and 
``$ \propto$'' means ``proportional to''.  
Moreover
\[
\frac{d}{d\beta}\,\mathbb{E}_{\beta}\!\big[\log(m+1)\big]
=-\mathrm{Var}_{\beta}\!\big(\log(m+1)\big)\,<\,0,
\]
so that  $\beta_{\ast}(n;x)$ decreases with $n$.
In particular $\beta_{\ast}(n;x)\to 0$ and eventually becomes equal to $0$
when $n\to\infty$, while $\beta_*(n,x)=1$  when $n\le n_0$ for some $n_0=n_0(x)$.  
Figure~\ref{fig:beta-star} illustrates $\beta_{\ast}(n;x)$ for
some pairs $(\lambda,\alpha)$.

\begin{figure}[ht]
\centering
\includegraphics[width=0.92\textwidth]{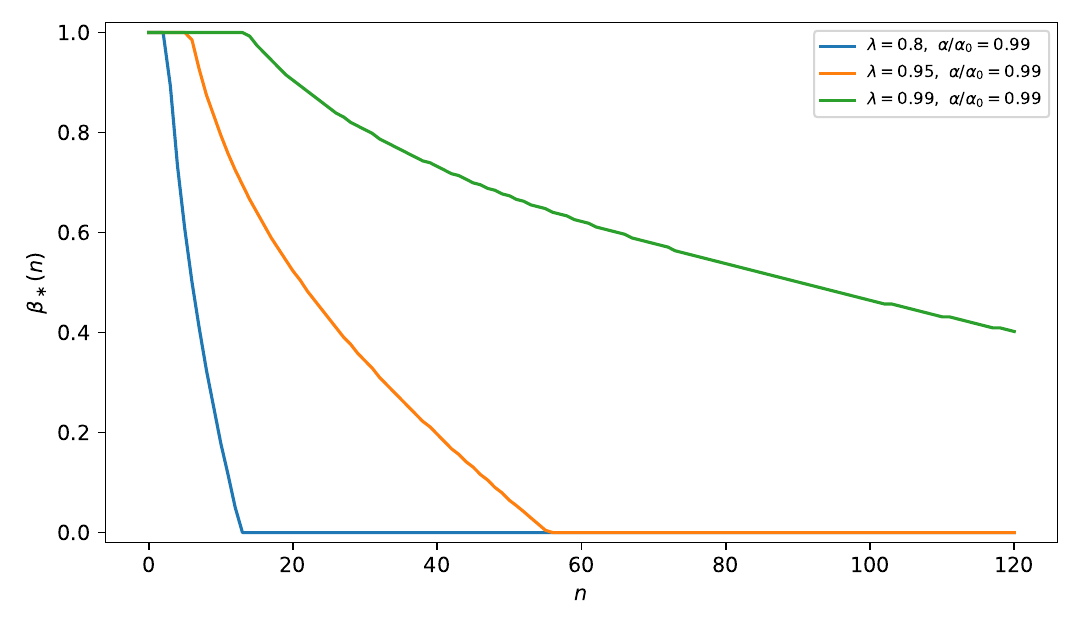}
\caption{\textcolor{black}{Numerically optimal exponent $\beta_{\ast}(n;x)$ minimizing $K_{\beta}(n;x)=(n+1)^{\beta/2}\sqrt{S_{\beta}(x)}$
(with $S_{\beta}(x)=\sum_{m\ge0}x^m/(m+1)^{\beta}$ and $x=(s^*)^{-2}$) for representative values of $(\lambda,\alpha/\alpha_0)$.
The optimizer decreases with $n$ and eventually reaches $0$, in agreement with the previous discussion.
% in Remark~\ref{rem:betaopt}
}}
\label{fig:beta-star}
\end{figure}

\begin{theorem}
  \label{genBp}
  Let $0\leq n<N$ be integers and $f\in B_N$. Assume that the modulus of the zeros of $f$ does not exceed $\lambda\in(0,1)$, and put $\alpha_0:=\frac{1-\lambda}{1+\lambda}$. Then, for each $\alpha\in(0,\alpha_0)$ and $s^*$ as in
  \eqref{defs}, one has the inequality:
  \begin{equation}
    \label{lbgBp}
         d_2(f,\cR_{n,n})\geq \sup_{\beta\in[0,1]}\sqrt{
\alpha\left(1-\frac{n}{N}-\left(\frac{s^*-\lambda}{1-\lambda s^*}\right)^{2N}
  % \left(K_\beta\left(n,\frac{1}{(s^*)^{2}}\right)\right)^2
  (n+1)^\beta\, \mathrm{Li}_\beta\left((s^*)^{-2}\right) \,
    \frac{(s^*)^{2(2-[N/\alpha])}}{N((s^*)^2-1)^2}\right) }
  \end{equation}
  as soon as the quantity under the square root is non-negative,
  with $[N/\alpha]$ to mean the integer part of $N/\alpha$.
    \end{theorem}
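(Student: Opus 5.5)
The plan is to repeat the argument used for Theorem~\ref{initp}, with $z^{-N}$ replaced by $f^\sharp$. The degree identity \eqref{degreeW12} still gives $N-n$ as a weighted sum of negative-index coefficients. That sum now has an infinite tail, which Lemma~\ref{coeffprod} controls.

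\emph{Step 1: reduction to a Hankel-type minimum.} Let $r\in\cR_{n,n}$. On $\TT$, $|f-r|=|f^\sharp-r^\sharp|$. The function $r^\sharp$ is rational of degree at most $n$, and after discarding the $\bar H^{2,0}$ component (orthogonality only lowers the norm) we may regard it as a member of $H^2_n$. Hence
\[
d_2(f,\cR_{n,n})\geq d_2(f^\sharp,H^2_n)=d_2\bigl({\bf P}_-(f^\sharp),\cR_{n-1,n}\bigr).
\]
By the equivalence of MA($n$) and RAB($n$) in Section~\ref{Rat} and Theorem~\ref{lemH1}, this equals $\min_{b_n\in B_n}\|{\bf P}_-(f^\sharp b_n)\|_2$. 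Indeed, ${\bf P}_-(f^\sharp b_n)={\bf P}_-({\bf P}_-(f^\sharp)b_n)$ because ${\bf P}_+(f^\sharp)b_n\in H^2$. So it suffices to bound $\sum_{k<0}|a_k|^2$ from below, uniformly in $b_n\in B_n$, where $f^\sharp b_n=\sum_k a_ke^{ik\theta}$.

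\emph{Step 2: the degree identity.} Since $f^\sharp b_n$ is continuous and unimodular on $\TT$, \eqref{indmero} and \eqref{degreeW12} give
\[
n-N=W(f^\sharp b_n)=\sum_k k|a_k|^2.
\]
As in \eqref{detind1}, this stays valid if $b_n$ has degree smaller than $n$. Dropping the nonnegative terms with $k\geq0$ yields $N-n\leq\sum_{k<0}|k||a_k|^2$. Put $M:=[N/\alpha]$ and split the sum at $-M$. For $-M\leq k<0$ we have $|k|\leq N/\alpha$, so that part is at most $(N/\alpha)\|{\bf P}_-(f^\sharp b_n)\|_2^2$. The remaining indices satisfy $k\leq -M-1<-N/\alpha$, so Lemma~\ref{coeffprod} applies to them.

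\emph{Step 3: the tail.} For any $\beta\in[0,1]$, Lemma~\ref{coeffprod} gives
\[
|a_k|^2\leq C_\beta^2(s^*)^{-2(|k|-1)},\qquad C_\beta^2:=\left(\frac{s^*-\lambda}{1-\lambda s^*}\right)^{2N}(n+1)^\beta\,\mathrm{Li}_\beta\bigl((s^*)^{-2}\bigr).
\]
Set $x=(s^*)^{-2}$. The tail is then at most $C_\beta^2\sum_{j\geq M+1}jx^{j-1}$. This is an explicit derivative of a geometric series, of order $x^{M}/(1-x)^2=(s^*)^{4-2M}/((s^*)^2-1)^2$ up to a linear factor. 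Rearranging
\[
N-n\leq \frac N\alpha\|{\bf P}_-(f^\sharp b_n)\|_2^2+\mathrm{tail}
\]
and dividing by $N/\alpha$ gives
\[
\|{\bf P}_-(f^\sharp b_n)\|_2^2\geq \alpha\Bigl(1-\frac nN-\frac{\mathrm{tail}}N\Bigr).
\]
This bound is uniform in $b_n$, so taking the supremum over $\beta$ gives \eqref{lbgBp}, provided the bracket is nonnegative so that the square root makes sense.

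\emph{Main obstacle.} The delicate point is evaluating the tail sum exactly in the form stated, $(s^*)^{2(2-M)}/((s^*)^2-1)^2$. The closed form is
\[
\sum_{j\geq M+1}jx^{j-1}=\frac{x^{M}\bigl((M+1)(1-x)+x\bigr)}{(1-x)^2},
\]
and this carries an extra factor $(M+1)(1-x)+x$. That factor is harmless only after a careful comparison; for instance, one can absorb part of it by using $|k|\leq$ const$\cdot(s^*)^{|k|}$-type bounds, or by slightly re-indexing where the split is made. I would first try to control this factor directly. If that fails, I would adjust the bookkeeping of the split at $-M$ so that the bound matches the stated constant.
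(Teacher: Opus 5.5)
\emph{Verdict: genuine gap.} Your Step~1 reduction to $\min_{b_n\in B_n}\|{\bf P}_-(f^\sharp b_n)\|_2$, your use of the Br\'ezis formula and your appeal to Lemma~\ref{coeffprod} follow the paper's route. What fails is the split at $-M$, with $M=[N/\alpha]$: it proves a strictly weaker inequality than \eqref{lbgBp}, and the remedies you sketch do not recover the stated constant.

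You keep the full weight $|k|$ on the indices $k\le -M-1$. Your tail is then
\[
C_\beta^2\sum_{j\geq M+1} j\,x^{j-1}=C_\beta^2\,\frac{x^{M}}{(1-x)^2}\bigl(1+M(1-x)\bigr),\qquad x=(s^*)^{-2}.
\]
This exceeds the term in the statement, $C_\beta^2\,x^M/(1-x)^2=C_\beta^2(s^*)^{2(2-M)}/((s^*)^2-1)^2$, by the factor $1+M(1-x)\geq 1$. That factor grows linearly in $N$; it is roughly $100$ for $N=100$, $\lambda=0.5$, $\alpha\approx 0.31$ as in Section~\ref{sec:numericalresults}.

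\begin{itemize}
\item The factor cannot be ``controlled directly'', because it is genuinely there.
\item Bounding $|k|$ by a multiple of a power of $s^*$ changes the geometric ratio, hence the form of the bound.
\item Moving the split outward pushes the coefficient of $\|{\bf P}_-(f^\sharp b_n)\|_2^2$ above $N/\alpha$. Moving it inward is not allowed, since the lemma needs $|k|\geq N/\alpha$.
\end{itemize}

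The missing step is the rearrangement \eqref{detind}--\eqref{detindin}. For $k\le -M-1$, write $-k=(-k-M)+M$ and move the $M|a_k|^2$ part into the main term. Together with $-k\le M$ for $-M\le k\le -1$, this gives
\[
N-n\leq\sum_{k\leq -M-1}(-k-M)|a_k|^2+M\sum_{k\leq -1}|a_k|^2 .
\]
Now only the excess weight $-k-M$ multiplies the bound from Lemma~\ref{coeffprod}, and
\[
\sum_{j\geq M+1}(j-M)x^{j-1}=\frac{x^M}{(1-x)^2},
\]
exactly, which is the constant in \eqref{lbgBp}. Since $M\leq N/\alpha$, you can then divide by $N/\alpha$ as you did.

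Two smaller points:
\begin{itemize}
\item In Step~1 nothing needs to be discarded. $r^\sharp$ has at most $n$ poles in $\DD$, so it already lies in $H^2_n$.
\item In Step~2 the identity $W(f^\sharp b_n)=n-N$ does not survive when $\deg b_n=n'<n$, since then $W=n'-N$. The inequality you need still follows from $N-n\leq N-n'$.
\end{itemize}
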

  \begin{proof}
    Define  $h(z):=(f-f(0))^\sharp(z)=\overline{f(1/\bar z)}-\overline{f(0)}\in \bar{H}^{2,0}$, and observe  from the discussion in Section \ref{Rat} after defining problem RAN($n$)
    that
    \begin{equation}
      \label{expverr}
      d_2(f,\cR_{n,n})= d_2(h,\cR_{n-1,n})=\min_{b_n\in B_n}\|A_{h}(b_n)\|_2=
\min_{b_n\in B_n}\|A_{f^\sharp}(b_n)\|_2=
      \min_{b_n\in B_n}
    \|{\bf P}_-(f^\sharp b_n)\|_2,
  \end{equation}
where we used Theorem \ref{lemH1} and the fact that $A_{f^\sharp}=A_{h}$, since $h$ and $f^\sharp$ differ by an additive constant. 
Write the Fourier expansion $f^\sharp b_n(e^{i\theta})=\sum_{k=-\infty}^{+\infty}a_k e^{ik\theta}$, so that
\begin{equation}
    \label{estgenP}
\|{\bf P}_-(f^\sharp b_n)\|_2^2=\sum_{k=-\infty}^{-1} |a_k|^2.
\end{equation}
Because  $f^\sharp=1/f$ has $N$ poles and no zeros in $\DD$, counting multiplicities, we get from \eqref{indmero} and \eqref{degreeW12} that
\[
  W(f^\sharp b_n)=n-N=\sum_{k=-\infty}^{+\infty}k|a_k|^2,
  %=  \sum_{k=-\infty}^{-N}k|a_k|^2+\sum_{k=-(N-1)}^{0}k|a_k|^2+
%  \sum_{k=0}^{\infty}k|a_k|^2.
\]
which can be rearranged for any $\alpha\in(0,\alpha_0)$ as
\begin{equation}
  \label{detind}
  N-n=
  \sum_{k=-\infty}^{-[N/\alpha]-1}(-k-[N/\alpha])|a_k|^2+[N/\alpha]\sum_{k=-\infty}^{-[N/\alpha]-1}|a_k|^2
+\sum_{k=-[N/\alpha]}^{-1}(-k)|a_k|^2
  -\sum_{k=0}^{+\infty}k|a_k|^2.
\end{equation}
Thus, we arrive at the inequality:
\begin{equation}
  \label{detindin}
  N-n\leq
  \sum_{k=-\infty}^{-[N/\alpha]-1}(-k-[N/\alpha])|a_k|^2+[N/\alpha]\sum_{k=-\infty}^{-1}|a_k|^2.
 \end{equation}
% From we deduce from \eqref{detindin}  that
% \begin{equation}
%   \label{estpp}
% N-n  \leq  \sum_{k=-\infty}^{-[N/\alpha]-1}(-k-[N/\alpha])|a_k|^2+N\sum_{k=-\infty}^{-1} |a_k|^2  +\left(\left[N/\alpha\right]-N\right).
% \end{equation}
 Next, let us rewrite  \eqref{estcoeff-weighted-beta} as
 \begin{equation}
  \label{estcoeff-weighted-beta-m}
\left|a_k\right|\leq \frac{K_\beta\left(n,(s^*)^{-2}\right)}{(s^*)^{|k|}}\left(\frac{s^*-\lambda}{1-\lambda s^*}\right)^N
\qquad \mathrm{for\ }k\leq-\frac{N}{\alpha},
\end{equation}
where for $|x|<1$ the quantity $K_\beta(n,x)$ is as in \eqref{defKb} and $\alpha$ is arbitrary in $(0,\alpha_0)$.
By \eqref{estcoeff-weighted-beta-m} we get that
 \begin{align}
 \nonumber  \sum_{k=-\infty}^{-[N/\alpha]-1}(-k-[N/\alpha])|a_k|^2\leq
  \left(\frac{s^*-\lambda}{1-\lambda s^*}\right)^{2N}\left(K_\beta\left(n,(s^*)^{-2}\right)\right)^2
                                                          \sum_{k=-\infty}^{-[N/\alpha]-1}(-k-[N/\alpha])\frac{1}{(s^*)^{2|k|}}\\
 \nonumber   =\left(\frac{s^*-\lambda}{1-\lambda s^*}\right)^{2N}
                                                 \left(K_\beta\left(n,(s^*)^{-2}\right)\right)^2
             \left(
\frac{[N/\alpha]\left((s^*)^2-1\right)+(s^*)^2}{(s^*)^{2[N/\alpha]}((s^*)^2-1)^2}-\frac{[N/\alpha]}{(s^*)^{2[N/\alpha]}((s^*)^2-1)}\right)\\
 \label{estgeomc}=\left(\frac{s^*-\lambda}{1-\lambda s^*}\right)^{2N}
                                                 \left(K_\beta\left(n,(s^*)^{-2}\right)\right)^2
    \left(\frac{(s^*)^{2(1-[N/\alpha])}}{((s^*)^2-1)^2}\right),
 \end{align}

 where we summed geometric series to the effect that
 $\sum_{k\geq k_0}^\infty kx^k=x^{k_0}\frac{k_0(1-x)+x}{(1-x)^2}$ for $|x|<1$.
 Substituting the estimate \eqref{estgeomc} in \eqref{detindin} while dividing by $[N/\alpha]$ yields
\begin{equation}
  \label{inegcd}
  \alpha\left(1-\frac{n}{N}-\left(\frac{s^*-\lambda}{1-\lambda s^*}\right)^{2N}
    \left(K_\beta\left(n,(s^*)^{-2}\right)\right)^2
    \frac{(s^*)^{2(1-[N/\alpha])}}{N((s^*)^2-1)^2}\right)  \leq
  \sum_{k=-\infty}^{-1}|a_k|^2.
\end{equation}
In view of
% \eqref{minHankB1} and
\eqref{estgenP}, inequality \eqref{inegcd} implies \eqref{lbgBp}.
%   In another connection, let us define
% \[g(e^{i\theta}):=e^{i(N-n)\theta}f^\sharp (e^{i\theta})b_n(e^{i\theta})=\sum_{k=-\infty}^{+\infty}a_{k}e^{i(k+N-n)\theta}.\]
% Since $W(g)=0$, we get from \eqref{degreeW12} that
% \[
%   \sum_{k=-\infty}^{n-N-1}(-k+n-N)|a_k|^2=
%   \sum_{k=n-N}^{+\infty}(k-n+N)|a_k|^2
% \]
% from which it ensues
% \[
%   \sum_{k=-\infty}^{-N+n-1}(-k)|a_k|^2=(N-n)\sum_{k=-\infty}^{-N+n-1}|a_k|^2+
%   \sum_{k=-N+n}^{+\infty}(k+N-n)|a_k|^2 .
%   \]

% so that, appealing again to \eqref{degreeW12}, we can write
% \begin{equation}
%   \label{detind12}
%   W(g)=0=\sum_{j=-\infty}^{+\infty}j|a_{j-N+n}|^2=
%   \sum_{k=-\infty}^{+\infty}(k+N-n)|a_k|^2,
% \end{equation}
% and on dividing by $N$ after a change in sign:
% \[
% 0=\sum_{k=-\infty}^{-N}(-\frac{k}{N}-1+\frac{n}{N})|a_k|^2+
% \sum_{k=-(N-1)}^{0}(-\frac{k}{N}-1+\frac{n}{N})|a_k|^2+
% \sum_{k=0}^{+\infty}(-\frac{k}{N}-1+\frac{n}{N})|a_k|^2.
% \]
% Hence,
% \[
%   \sum_{k=-\infty}^{-N}(-\frac{k}{N})|a_k|^2=
%   \sum_{k=-\infty}^{-N}(1-\frac{n}{N})|a_k|^2+
%   \sum_{k=-(N-1)}^{0}(\frac{k}{N}+1-\frac{n}{N})|a_k|^2+
% \sum_{k=0}^{+\infty}(\frac{k}{N}+1-\frac{n}{N})|a_k|^2.
%   \]

%     Similar to \eqref{expverr} holds the relation: 
\end{proof}
When applying Theorem \ref{genBp}, one may safely assume that $f$ is known,
thus also $\lambda$. However, a noteworthy  parameter of inequality \eqref{lbgBp}
is $\alpha\in(0,(1-\lambda)/(1+\lambda))$ and an  obvious question is
about the optimal choice. Numerical experiments will shed some
light on this issue. Note that when $\lambda\to0$, picking
$\alpha$ arbitrary close to $(1-\lambda)/(1+\lambda)$ yields back
Theorem \ref{initp}. Though certainly not sharp,
Theorem \ref{genBp} seems the first to establish a lower bound
in terms of the respective degrees of the approximant and the approximated
Blaschke product.

\begin{remark}
%\rev{Possible improvements. (i) In Lemma~\ref{coeffprod}, the estimate comes from a Cauchy--Schwarz bound on a convolution of Fourier coefficients.
%A refinement could try to exploit more structure of $b_n$ (e.g.\ sharper bounds on $\widehat{b_n}$ depending on $\lambda$, as developed in the appendix),
%or to use weighted $\ell^2$ estimates tailored to the tail $j\ge -k$. 
In the proof of Theorem~\ref{genBp}, the identity \eqref{detind} contains a negative contribution from the non-negative Fourier modes
$-\sum_{k\ge 0}k|a_k|^2$. Discarding as we did to obtain  \eqref{detindin}
is far from sharp when $n$ is close to $N$.
A non-trivial lower bound on $\sum_{k\ge 0}k|a_k|^2$, uniform with respect
to $b_n\in B_n$, would  strengthen \eqref{detindin}
and improve the constants in \eqref{lbgBp}.
\end{remark}

\section{Numerical results}
\label{sec:numericalresults}
In order to estimate how pessimistic the bounds provided in the previous sections are, we used the RARL2\footnote{See \url{https://project.inria.fr/rarl2/}.} software. RARL2 is a rational approximation software designed to perform stable rational approximation (\emph{i.e.}, the equivalent of the problem RAB($n$)) for matrix-valued functions. Of course, it works in particular for matrices of size $1\times 1$, \emph{i.e.}, scalar functions which is the problem of interest for us. The problem RAB($n$) is really hard to solve in general when $n$ grows, as the criterion is a non-convex function defined on a space of (complex) dimension~$n$\footnote{As we have seen with Equation~\eqref{minp}, the best numerator $p_{n-1}$ for a given denominator $q_n$ is explicitly given by $p_{n-1} = \widetilde{q_n}{\bf P}_+(fq_n/\widetilde{q_n})$ so the problem really consists in choosing the position of the $n$ complex zeros of $q_n$ in the unit disk.} that can have quite a lot of local minima. The software tools trying to solve this best approximation problem are usually a best-effort: they try to explore the space of parameters looking for local minima of the criterion, hoping not to forget any, and retaining the best one. RARL2 is no exception, and do so by parameterizing the space of stable rational functions with unitary matrix realizations. An interest of theoretical lower bounds such as the one discussed in the previous sections is precisely that, when they are close enough to the true minimal value, they can assess the quality of the minimizer computed by rational approximation software tools.

RARL2 can perform the optimization process from an initial point computed using Kung's truncation method~\cite{KL} or more generally any initial point provided by the user. Kung's truncation method is based on discarding the analytic part of a $L^{\infty}$ meromorphic analog of RAB($n$) (AAK\footnote{AAK stands for Adamjan-Arov-Krein.} approximation) and often provides a good initial guess for the $L^2$ problem. However, for situations under study here, it is practically useless. Indeed,  a Blaschke product being of constant modulus  on the unit circle, the constant $0$ is a best $L^\infty$ meromorphic approximant and Kung's  truncation method  yields the zero rational approximant, which is of no use to initialize the $L^2$ approximation problem under study because the poles are undefined. In such a case, RARL2 proposes an interesting alternative method to initialize the search, that consists in iteratively solving the problem RAB($n$) for increasing values of $n$ by construing local minima found at order $n$ as functions  of order $(n+1)$ with pole-zero cancellation to find initial guesses of order $n+1$. Moreover, when the function to be approximated is conjugate symmetric, RARL2 proposes an option to search for best rational approximants that are also conjugate symmetric (equivalently: that have real coefficients); while a best $L^2$ rational approximant of given degree has no reason to share this property with the approximated function (indeed there are examples to the contrary),  it is sometimes desirable to get rational approximants with real coefficients and limiting the search to the latter usually results in a much faster algorithm.

The bounds presented in Theorems~\ref{initp} and~\ref{genBp} are concerned with  approximation by fractions in $\cR_{n,n}\cap H^2$, to functions in $H^2$. In order to use RARL2, we rely on the remark after the definition of problem RAN($n$) on page~\pageref{remarkReductionFromRAtoRAB} to turn this into a problem of approximation by fractions in $\cR_{n-1,n}\cap \overline{H}^{2,0}$ to functions in $\overline{H}^{2,0}$. Notice that in the case at hand, $f$ is a Blaschke product  and thus $f^\sharp$ is $1/f$, whence the problem boils down to approximate $1/f - \overline{f(0)}$.

To illustrate Theorem~\ref{initp} we took $N=100$ and used RARL2 to estimate the best approximation of degree $n$ with $n$ from~$1$ to~$20$. Notice that, if $p_n/q_n$ is a best approximation to $z^N$ and if $\theta \in [0, 2\pi)$, $\ee^{-\text{i}N\theta}\frac{p(\ee^{\text{i}\theta}z)}{q(\ee^{\text{i}\theta}z)}$ is also a best approximation, since \[\Abs{\ee^{-\text{i}N\theta}\frac{p(\ee^{\text{i}\theta}z)}{q(\ee^{\text{i}\theta}z)} - z^N} = \Abs{\frac{p(\ee^{\text{i}\theta}z)}{q(\ee^{\text{i}\theta}z)} - (\ee^{\text{i}\theta}z)^N},\] so its $L^2$-norm is the same as the norm of $p/q-z^N$. As a consequence, the minimization has infinitely many minima which makes it very tough to solve. We managed to run the general \emph{complex} variant of RARL2 up to $n=3$ and then had to turn to the \emph{real} (namely: conjugate-symmetric) variant thereof  for subsequent values of $n$, in order to bring down the computational load. While we did not justify this fact rigorously, one can surmise that a best real approximant to $z^N$ is also a best complex approximant and this is indeed what our experimentations suggest for $n=1,2,3$. In order to ease comparisons, and since the numbers involved are close to~$1$, the graphic in Figure~\ref{firstBound} shows the evolution with $n$ of the values $1-\sqrt{1-n/N}$ (one minus the bound of Theorem~\ref{initp}) and of $1-\|p_{n}/q_n-z^N\|_2$ where $p_{n}/q_n$ is the best approximation obtained with RARL2.

\begin{figure}[htp]
  \centering
  \includegraphics{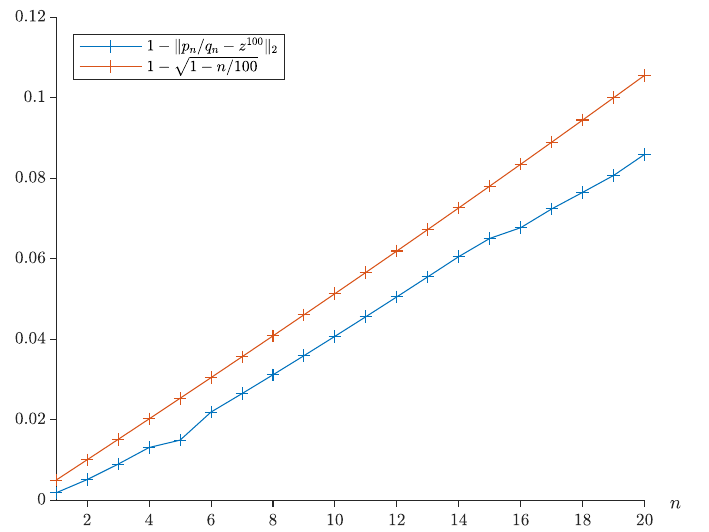}
  \caption{Comparison of the theoretical bound given in Theorem~\ref{initp} and the best rational approximation found by RARL2 when approximating $z^{100}$ by a fraction of $\cR_{n,n}$. The quantity plotted are actually one minus the bound and one minus the norm, so higher is better. }
  \label{firstBound}
\end{figure}

To illustrate Theorem~\ref{genBp} we took $N=100$, $\lambda = 0.5$ and considered three different Blaschke products:
\[ B_1(z) = \left(\frac{z-\lambda}{1-\lambda z}\right)^N, \qquad B_2(z) = z^{N-1}\frac{z-\lambda}{1-\lambda z}, \qquad B_3(z) = \prod_{k=0}^{N-1} \frac{z-\lambda \ee^{\text{i}\frac{2k\pi}{N}}}{1-\lambda \ee^{-\frac{2\text{i}k\pi}{N}} z}.\] Their zero of maximal modulus has the same magnitude $\lambda$, so the bound of Theorem~\ref{genBp} is the same for the three of them. The results obtained with RARL2 together with the bound given by Theorem~\ref{genBp} (where, for each~$n$, the parameter~$\alpha$ has been chosen in $(0, \alpha_0)$ so as to maximize the bound) are summed up in Figure~\ref{secondBound}. Again, except for very small degrees, we took advantage of the fact that $B_1$, $B_2$ and $B_3$ are indeed real functions on the real axis to use the \emph{real} option of RARL2 to obtain an approxiation (and, again, for small degrees we did not eventually obtain a better \emph{complex} approximation than the best \emph{real} one with RARL2).

\begin{figure}[htp]
  \centering
  \includegraphics{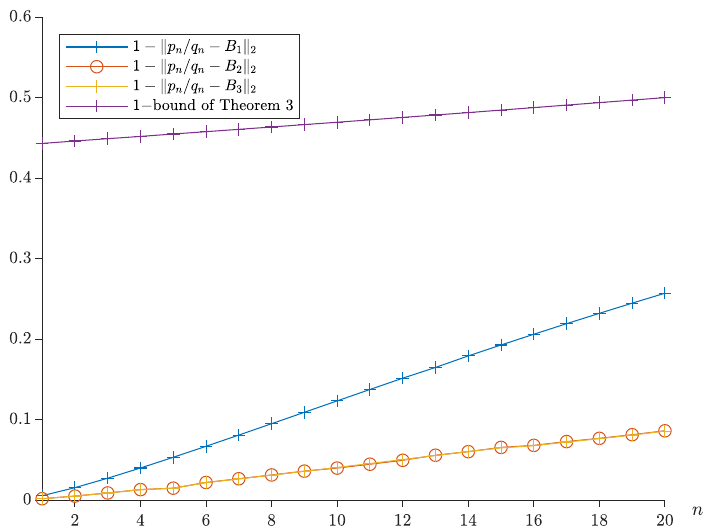}
  \caption{Comparison of the theoretical bound given in Theorem~\ref{genBp} and the best rational approximation found by RARL2 when approximating $B_1$, $B_2$ and $B_3$ by a fraction of $\cR_{n,n}$. The quantity plotted are actually one minus the bound and one minus the norm, so higher is better.}
  \label{secondBound}
\end{figure}

One can observe that the degrees  of approximation to $B_2$ and $B_3$ do not differ much from each another and are rather comparable to the one shown when approximating $z^N$. The Blaschke product $B_1$, however, with only one zero of high order away from the origin  is better approximated. In contrast with the bound of Theorem~\ref{initp} that was somehow close to sharp, there is a clear gap between the value of the bound in Theorem~\ref{genBp} and the best approximation we were able to obtain; this gap is most probably due to the lack of sharpness of the bound itself rather than the poor quality of the approximants we obtained. One observes on this example where $\lambda=0.5$ and $N=100$ that we are not in a regime where it is really taken advantage of the parameter $\beta$ since its optimal value has been observed to be $0$ for all $n \in \llbracket 2, 20\rrbracket$ (for $n=1$ the optimal value is $\beta \simeq 0.5$). This is coherent with the curves shown in Figure~\ref{fig:beta-star} where the trend clearly shows that for rather moderate values of $\lambda$, the optimal value of $\beta$ quickly becomes~$0$ as $n$ grows. The bound itself highly depends on the parameter $\alpha$ as is illustrated in Figure~\ref{boundFunctionOfAlpha} where the value of the bound is shown as a function of $\alpha \in (0, \alpha_0)$ for $n=4$. The zoom on the values of $\alpha$ close to its maximum clearly shows the discontinuities of the bound due to the integer part of $N/\alpha$. The graphics obtained for other values of $n$ are the same: indeed, on this example, one observes that the value of $\alpha$ barely depends on $n \in \llbracket 1, 20\rrbracket$ and remains somewhat close to $0.31$.

\begin{figure}[htp]
  \centering
  \includegraphics[scale=0.6]{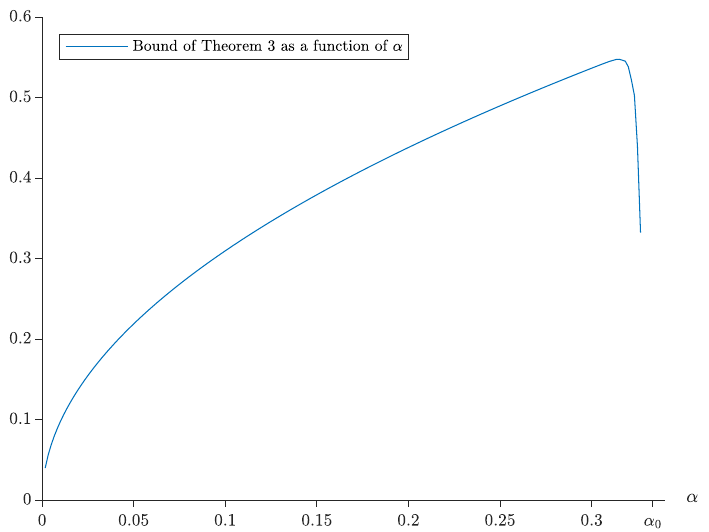}\qquad
  \includegraphics[scale=0.6]{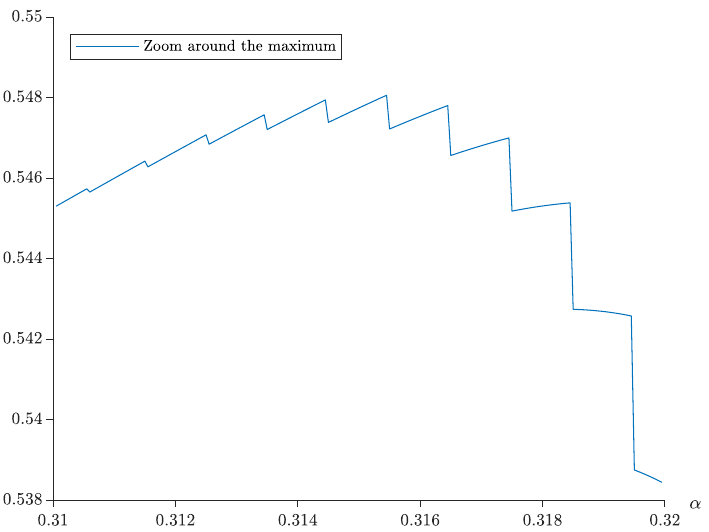}
  \caption{Values of the bound given by Theorem~\ref{genBp} as a function of $\alpha$. For $\alpha \in (0,\alpha_0)$ on the left; zoom for $\alpha$ rather close to the point where the bound is maximal, on the right.}
  \label{boundFunctionOfAlpha}
\end{figure}

\section{Appendix: Fourier coefficients of Blaschke products}
The Fourier coefficients $\{a_n\}_{n\in\NN}$ of a  Blaschke product with infinite degree
cannot be
$o(1/n)$ (little oh!) in modulus but they can be $O(1/n)$ (big oh!); this follows from
\cite[Theorems 2 \& 3]{NS}. In contrast Blaschke products of finite degree have exponentially decaying Fourier coefficients, but precise estimates seem hard to find in the literature. The purpose of this appendix is to derive such estimates, at least when the zeros are bounded away from the circle and the coefficients have sufficiently high index. This is crucial to the proof of Theorem \ref{genBp} but also of independent interest.

  Let $B\in B_n$ be a finite Blaschke product of
degree $n$, so that
\[
B(z)=\prod_{i=1}^{n}\frac{z-\lambda_{i}}{1-\overline{\lambda_{i}}z},\qquad \lambda_i\in\DD.
\]
Thus, the Fourier-Taylor coefficients of $B$ are given by
\[
\widehat{B}(k)=\frac{1}{2\pi}\oint_{\abs{z}=1}z^{-k-1}\prod_{i=1}^{n}\frac{z-\lambda_{i}}{1-\overline{\lambda_{i}}z}{\rm d}z,\qquad k\geq0.
\]
For $w \in \mathbb{D}$, we denote with $b_w$ the elementary Blaschke factor associated with $w$:
\[ b_{w}(z)=\frac{z-w}{1-\overline{w} z}. \]
Let 
\[
\lambda:=\lambda_{{\rm max}}=\max_{i=1,\dots ,n}\abs{\lambda_{i}}<1,
\]
Let us put also 
\[
\alpha_{0}=\frac{1-\lambda}{1+\lambda}.
\]

\begin{prop}
  \label{estFTnpa}
  Let $\alpha\in(0,\alpha_{0})$ and define
\begin{equation}
  \label{defs*}
s^{*}:=\frac{\alpha^{-1}-1+(\alpha^{-1}+1)\lambda^{2}}{2\lambda\alpha^{-1}}+\sqrt{\left(\frac{\alpha^{-1}-1+(\alpha^{-1}+1)\lambda^{2}}{2\lambda\alpha^{-1}}\right)^{2}-1}.
\end{equation}
Then $s^{*}\in(1,1/\lambda)$ and one has:
\begin{equation}
  \label{majFTbn}
\abs{\widehat{B}(k)}\leq\left(\frac{b_{\lambda}(s^{*})}{s^{*}{}^{k/n}}\right)^{n}<1\qquad\mathrm{for}\quad k\geq\alpha^{-1}n. 
 \end{equation}
In particular 
\begin{equation}
  \label{majscar}
  \sum_{k\geq k_0}\abs{\widehat{B}(k)}^{2}\leq\frac{b_{\lambda}(s^{*})^{2n}}{(s^{*})^{2k_0}}\frac{\left(s^{*}\right)^2}{\left(s^{*}\right)^2-1}\qquad
  \mathrm{for}\qquad k_0\geq \alpha^{-1}n.
\end{equation}
% and
% \[
% \sum_{k\geq\alpha^{-1}n}k\abs{\widehat{B}(k)}^{2}\lesssim n\left(\frac{b_{\lambda}(s^{*})}{s^{*}{}^{\alpha^{-1}}}\right)^{2n}\frac{1}{(s^{*2}-1)^{2}}.
% \]
\end{prop}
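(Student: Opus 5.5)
The plan is to estimate the coefficients by a Cauchy integral over a circle of radius $s>1$, bound $B$ on that circle using only $\lambda$, and then choose $s$ optimally. Since all zeros satisfy $|\lambda_i|\le\lambda$, $B$ is holomorphic in $|z|<1/\lambda$. For any $s\in(1,1/\lambda)$, moving the contour to $|z|=s$ gives
\[
|\widehat B(k)|\le s^{-k}\max_{|z|=s}|B(z)|\le s^{-k}\prod_{i=1}^n\max_{|z|=s}|b_{\lambda_i}(z)|.
\]

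\textbf{Step 1: bound each Blaschke factor on $|z|=s$.} I would use the identity
\[
|b_w(z)|^2=1+\frac{(|z|^2-1)(1-|w|^2)}{|1-\bar w z|^2}.
\]
For fixed $|w|=r$ and $|z|=s>1$, the right-hand side is largest when $|1-\bar w z|$ is smallest, namely $1-rs$, attained when $w$ is aligned with $z$. Then $g(r)=(1-r^2)/(1-rs)^2$ has logarithmic derivative $2(s-r)/\big((1-r^2)(1-rs)\big)>0$, so $g$ increases up to $r=\lambda$. Hence
\[
|b_{\lambda_i}(z)|\le \frac{s-\lambda}{1-\lambda s}=b_\lambda(s)\quad\text{on } |z|=s,
\]
and therefore $|\widehat B(k)|\le b_\lambda(s)^n s^{-k}=\big(b_\lambda(s)/s^{k/n}\big)^n$ for every $s\in(1,1/\lambda)$.

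\textbf{Step 2: choose $s$.} Put $t:=\alpha^{-1}$ and $\varphi(s):=\log b_\lambda(s)-t\log s$. Its derivative is
\[
\varphi'(s)=\frac{1-\lambda^2}{(s-\lambda)(1-\lambda s)}-\frac ts .
\]
Setting this to zero gives the quadratic
\[
t\lambda s^2-\big(t-1+(t+1)\lambda^2\big)s+t\lambda=0 .
\]
Its roots have product $1$, and its larger root is exactly $s^*$ in \eqref{defs*}. Reality of the roots, i.e.\ that the discriminant is positive, should reduce to $t(1-\lambda)^2>(1+\lambda)^2$, that is $\alpha<\alpha_0$. Because the product of the roots is $1$, this gives $s^*>1$.

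To get $s^*<1/\lambda$, note that $\varphi'(s)\to+\infty$ as $s\uparrow1/\lambda$. Also $\varphi'(1)=\frac{1+\lambda}{1-\lambda}-t<0$ precisely when $\alpha<\alpha_0$. So $\varphi'$ changes sign from negative to positive in $(1,1/\lambda)$, and its only root there is $s^*$, which puts $s^*$ inside the interval. Since $\varphi(1)=0$ and $\varphi$ decreases on $(1,s^*)$, we get $\varphi(s^*)<0$, i.e.\ $b_\lambda(s^*)/(s^*)^{t}<1$.

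For $k\ge tn$, we have $(s^*)^{-k}\le (s^*)^{-tn}$ because $s^*>1$, so the same $s^*$ gives \eqref{majFTbn}, including the strict inequality $<1$.

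\textbf{Step 3: the tail sum.} For \eqref{majscar}, square the bound $|\widehat B(k)|\le b_\lambda(s^*)^n (s^*)^{-k}$ and sum the geometric series over $k\ge k_0$:
\[
\sum_{k\ge k_0}(s^*)^{-2k}=(s^*)^{-2k_0}\frac{(s^*)^2}{(s^*)^2-1}.
\]

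\textbf{Expected obstacle.} I expect the main difficulty to be the bookkeeping in Step 2: verifying that the discriminant condition and the localisation $1<s^*<1/\lambda$ are exactly equivalent to $\alpha<\alpha_0$. The pointwise maximisation in Step 1 is routine by comparison.
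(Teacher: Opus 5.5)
Your proof is correct and follows essentially the paper's route: a Cauchy integral on $|z|=s\in(1,1/\lambda)$, the bound $\max_{|z|=s}|b_{\lambda_i}(z)|\le b_\lambda(s)$, and a study of $s\mapsto b_\lambda(s)/s^{\alpha^{-1}}$, whose critical-point quadratic has reciprocal roots, with $\varphi(1)=0$ and $\varphi$ decreasing on $[1,s^*]$. Handling $k\ge\alpha^{-1}n$ through $(s^*)^{-k}\le(s^*)^{-\alpha^{-1}n}$ is a slightly cleaner substitute for the paper's application to $a=k/n$.

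One small algebra slip: the discriminant condition reduces to $t(1-\lambda)^2>1-\lambda^2$, i.e. $t(1-\lambda)>1+\lambda$. It does not reduce to $t(1-\lambda)^2>(1+\lambda)^2$, which would mean $\alpha<\alpha_0^2$. This is harmless, because your intermediate-value argument ($\varphi'(1)<0$ and $\varphi'\to+\infty$ as $s\uparrow 1/\lambda$) already produces the root in $(1,1/\lambda)$.
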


  \begin{remark}
    Similar estimates allows one to show that if $k/n{\leq}\alpha^{-1}$ then there exists
$s^{*}\in(\lambda,1)$ such that 
\[
\abs{\widehat{B}(k)}\leq\frac{\left(\frac{s^{*}+\lambda}{1+\lambda s^{*}}\right)}{{{s^{*}}^{k/n}}}{\leq\frac{\left(\frac{s^{*}+\lambda}{1+\lambda s^{*}}\right)}{{{s^{*}}^{{\alpha}}}}}\leq1.
\]
This estimate is not as useful to our purpose since it does not show significant decay.
\end{remark}
\begin{proof}
It is well known~\cite{Garnett} that for $z,w\in\mathbb{D}$ we have:
\begin{align*}
\frac{\abs{z}-\abs{w}}{1-\abs{w}\abs{z}}\leq\Abs{\frac{z-w}{1-\bar{w}z}}\leq\frac{\abs{z}+\abs{w}}{1+\abs{w}\abs{z}}.
\end{align*}
Indeed, if $w \in \mathbb{D}$ and $s \in (0,1)$, we have
\begin{align*}
  \max_{\abs{z} = s} \abs{b_{w}(z)} = \Abs{b_{w}\left(-\frac{w}{\abs{w}}s\right)} = \frac{s+\abs{w}}{1+\abs{w}s}.
\end{align*}
Equivalently, for $w\in\mathbb{D}$ and  $\zeta\notin\mathbb{D}$,
\begin{align*}
\frac{\abs{\zeta}+\abs{w}}{1+\abs{w}\abs{\zeta}}\leq\Abs{\frac{\zeta-\bar{w}}{1-w\zeta}}\leq\frac{\abs{\zeta}-\abs{w}}{1-\abs{w}\abs{\zeta}}.
\end{align*}
And accordingly, if $w \in \mathbb{D}$ and $s \in (1, 1/\abs{w})$, we have
\begin{align*}
  \max_{\abs{z} = s} \abs{b_{w}(z)} = \Abs{b_{w}\left(\frac{w}{\abs{w}}s\right)} = \frac{s-\abs{w}}{1-\abs{w}s}.
\end{align*}
Also observe that, for $s \in (0,1)$, the function $\lambda \mapsto \frac{s+\lambda}{1+\lambda s}$ increases for $\lambda \in (0,1)$; accordingly, if $s > 1$, the function $\lambda \mapsto \frac{s-\lambda}{1-\lambda s}$ increases for $\lambda \in (0, 1/s)$.

Now, the Fourier coefficients of $B$ can be expressed using a contour
integral:
\begin{align*}
\widehat{B}(k)=\frac{1}{2\mathrm{i}\pi}\oint_{\abs{z}=s} \frac{B(z)}{z^{k+1}}\,{\rm d}z,
\end{align*}
where $s$ can be chosen arbitrarily in $(0,1/\lambda)$ by the Cauchy theorem. Clearly, the magnitude of the integral can be bounded above so as to yield: 
\begin{align*}
  \abs{\widehat{B}(k)} \leq \max_{\abs{z}=s}{\frac{\abs{B(z)}}{{\abs{z}^{k}}}}
  = \frac{\max_{\abs{z}=s}{\abs{B(z)}}}{s^k}
  \leq \frac{1}{s^k}\,\prod_{i=1}^n \max_{\abs{z}=s} \abs{b_{\lambda_i}(z)}.
\end{align*}
From the above observations, $\max_{\abs{z}=s} \abs{b_{\lambda_i}(z)} = \max_{\abs{z}=s} \abs{b_{\abs{\lambda_i}}(z)} \le  \max_{\abs{z}=s}  \abs{b_{\lambda}(z)}$. Therefore,
\begin{align}
  \abs{\widehat{B}(k)} \leq
  \begin{cases}
    \frac{b_{\lambda}(s)^{n}}{{s^{k}}}\ \mathrm{if}\ s\in(1,1/\lambda)\\[0.1cm]
    \frac{b_{-\lambda}(s)^{n}}{{s^{k}}}\ \mathrm{if}\ s\in(0,1)
  \end{cases}.\label{eq:-1}
\end{align}

Let $a \geq \alpha^{-1}$ and define $\phi_a : s \mapsto \frac{b_\lambda(s)}{s^a}$ on $(0,1/\lambda)$. The sign of $\phi_a'(s)$ is the same as the sign of $s^2 - 2\beta_{a,\lambda}s + 1$ where $\beta_{a,\lambda} = \frac{(a+1)\lambda^2 + (a-1)}{2a\lambda}$. Since $\alpha < \alpha_0 = \frac{1-\lambda}{1+\lambda}$, it holds that $a > \frac{1+\lambda}{1-\lambda}$, whence $\beta_{a,\lambda} > 1$. Therefore $\phi_a$ increases on $(0,s_-)$, decreases on $(s_-, s_+)$, and finally increases on $(s_+, 1/\lambda)$, where \[s_{\pm} =  s_{\pm}(a,\lambda) = \beta_{a,\lambda} \pm \sqrt{\beta_{a,\lambda}^2-1}.\]
Now, for a given $\lambda$, the function $a \mapsto \beta_{a,\lambda}$ increases. Since the functions $y\mapsto y\pm\sqrt{y^{2}-1}$
are, respectively, increasing/decreasing for $y>1$, $s_{\pm}$ is, respectively, increasing/decreasing with respect to $a$. Moreover, both $s_{\pm}$ tend to $1$ when $a \to \frac{1+\lambda}{1-\lambda}$. Finally, $s_-$ tends to $\lambda$ when $a \to  +\infty$ and $s_+$ tends to $1/\lambda$ when $a \to +\infty$. This proves that, for any $\lambda$ and any $a> \alpha^{-1}  > \frac{1+\lambda}{1-\lambda}$ one has $\lambda < s_- < 1 < s^* < s_+ < 1/\lambda$ where $s^* = s_+(\alpha^{-1},\lambda)$. Since $\phi_a$ decreases strictly on $(s_-, s_+)$ and $\phi_a(1)=1$, this shows that $\phi_a(s^\star)<1$. Applying this result to $a=k/n$, we get from \eqref{eq:-1} that \eqref{majFTbn} holds, and it ensues upon summing a geometric series that \eqref{majscar} also holds. 

% \begin{align*}
% \sum_{k\geq\alpha^{-1}n}\abs{\widehat{B}(k)}^{2} & \leq\sum_{k\geq\alpha^{-1}n}\left(\frac{b_{\lambda}(s^{*})}{s^{*}{}^{k/n}}\right)^{2n}\\
%  & =\sum_{k\geq\alpha^{-1}n}\left(\frac{b_{\lambda}(s^{*})}{s^{*}{}^{k/n-\alpha^{-1}+\alpha^{-1}}}\right)^{2n}\\
%  & \leq\left(\frac{b_{\lambda}(s^{*})}{s^{*}{}^{\alpha^{-1}}}\right)^{2n}\sum_{k\geq\alpha^{-1}n}\frac{1}{s^{*}{}^{2(k-\alpha^{-1}n)}}\\
%  & \leq\left(\frac{b_{\lambda}(s^{*})}{s^{*}{}^{\alpha^{-1}}}\right)^{2n}\frac{s^{*2}}{s^{*2}-1}
% \end{align*}
% and similarly
% \[
% \sum_{k\geq\alpha^{-1}n}k\abs{\widehat{B}(k)}^{2}\lesssim n\left(\frac{b_{\lambda}(s^{*})}{s^{*}{}^{\alpha^{-1}}}\right)^{2n}\frac{1}{(s^{*2}-1)^{2}}.
% \]
\end{proof}
\bibliographystyle{plain_initials}
\bibliography{biblio}
\end{document}